\documentclass[11pt,reqno]{amsart}

\usepackage{mathtools,amssymb,amsthm,mathrsfs}
\usepackage{paralist}
\usepackage{microtype}
\usepackage[T1]{fontenc}
\usepackage[utf8]{inputenc}
\usepackage[colorlinks=true,linkcolor=blue,citecolor=blue,urlcolor=blue]{hyperref}

\newtheorem{theorem}{Theorem}[section]
\newtheorem{proposition}[theorem]{Proposition}
\newtheorem{lemma}[theorem]{Lemma}

\theoremstyle{definition}

\theoremstyle{remark}
\newtheorem{remark}[theorem]{Remark}

\newcommand\Set[2]{\left\{#1\ \middle\vert\ #2 \right\}}

\makeatletter
\@namedef{subjclassname@2020}{\textup{2020} Mathematics Subject Classification}
\makeatother

\title[Gromov-compactifiability of Wasserstein spaces]{Gromov-compactifiability of Wasserstein spaces}
\author{Huajian Jiang}
\address{School of Mathematics, Sichuan University, Chengdu 610064, China}
\email{huajianjiang@scu.edu.cn}
\author{Zhi-Xiang Zhu}
\address{School of Mathematics, Nanjing University, Nanjing 210093, China}
\email{zhuzhixiang0928@gmail.com}
\keywords{Wasserstein spaces, optimal transport, horofunction extensions, metric compactifications, Gromov-compactifiability, octahedral norms}
\subjclass[2020]{Primary 54D35; Secondary 49Q22, 53C23, 46B20}

\begin{document}

\begin{abstract}
We characterize Gromov-compactifiability of Wasserstein spaces over Polish metric spaces for \(1\leq p\le\infty\). The space \(\mathcal P_1(X)\) is Gromov-compactifiable if and only if \(X\) is bounded and Gromov-compactifiable, whereas \(\mathcal P_p(X)\) is Gromov-compactifiable for every \(1<p<\infty\). At the other endpoint, \(\mathcal P_\infty(X)\) is Gromov-compactifiable if and only if \(X\) is. For \(p=1\), the sufficiency argument combines an averaged triangle defect with tightness. For \(1<p<\infty\), the main tool is the uniform convexity of \(L^p\). The \(p=\infty\) proof uses the finite-set criterion and a mass redistribution construction.
\end{abstract}

\maketitle

\section{Introduction}

Let \((X,d)\) be a metric space and fix \(x_0\in X\). The horofunction map is
\[
h_{x_0}\colon X\to\mathbb R^X,
\qquad
h_{x_0}(z)(x):=d(x,z)-d(x_0,z).
\]
The closure of its image in the topology of pointwise convergence is the horofunction extension \(\overline X^{\rm h}\). Following \cite{MR4952998}, we call \(X\) \emph{Gromov-compactifiable} if \(h_{x_0}\) is a topological embedding, or, equivalently, if \(\overline X^{\rm h}\) is a compactification of \(X\). This property does not depend on the choice of \(x_0\). Although \(\overline X^{\rm h}\) is always compact in the pointwise topology, the topology induced on \(h_{x_0}(X)\) need not agree with the metric topology of \(X\).

Daniilidis, Garrido, Jaramillo, and Tapia-Garc\'ia characterized this property by a finite-set condition that records a uniform defect in the triangle inequality \cite{MR4952998}. For Banach spaces, they also related Gromov-compactifiability to non-octahedrality of the norm. The finite-set condition is the main link between the geometry of \(X\) and that of its Wasserstein spaces.

For \(1\leq p<\infty\), let \(\mathcal P_p(X)\) denote the space of Borel probability measures on \(X\) with finite \(p\)-th moment, equipped with \(W_p\). Let \(\mathcal P_\infty(X)\) denote the space of probability measures with bounded support, equipped with \(W_\infty\). We refer to \cite{MR2401600,MR2459454} for background. When \(X\) is noncompact, its Wasserstein spaces need not be locally compact, so properness does not settle the question considered here.

The horofunction construction originates in Gromov's work \cite{MR919829}. Related metric compactifications were studied by Rieffel for locally compact metric spaces \cite{Rieffel2002}, by Walsh for finite-dimensional normed spaces \cite{Walsh2007}, and by Guti\'errez for infinite-dimensional \(\ell_p\) and \(L_p\) spaces \cite{Gutierrez2019,Gutierrez2020}. Throughout the paper, ``Gromov-compactifiable'' has the meaning introduced in \cite{MR4952998}. The phrase ``Gromov compactification'' is also used for other constructions in \cite{MR4700367,MR4205705,MR4422057}.

Several papers study asymptotic functions on Wasserstein spaces from a different viewpoint. Bertrand and Kloeckner described the geodesic boundary of \(\mathcal P_2(X)\) over locally compact Hadamard spaces \cite{BertrandKloeckner2012}. Zhu, Wu, and Cui studied horofunctions generated by sequences of atomic measures \cite{MR4154570}, while Zhu, Li, and Cui studied rays, co-rays, and Busemann functions in \(\mathcal P_p(X)\) for \(p>1\) over noncompact locally compact length spaces \cite{MR4249875}. These works concern particular boundary functions or geodesic behavior. The present question is topological. We ask when the horofunction map recovers the Wasserstein topology, without a local compactness or length-space assumption on \(X\).

Our results divide according to the value of \(p\). The first theorem settles the remaining direction for \(p=1\).

\begin{theorem}\label{thm:p1_compactifiable}
Let \((X,d)\) be a Polish metric space. Then \(\mathcal P_1(X)\) is Gromov-compactifiable if and only if \(X\) is bounded and Gromov-compactifiable.
\end{theorem}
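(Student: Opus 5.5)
Both directions will be argued through the finite-set criterion of \cite{MR4952998}, used in the following form. A metric space \(Y\) is Gromov-compactifiable if and only if for every \(y\in Y\) and \(\varepsilon>0\) there are finitely many points \(a_1,\dots,a_n\in Y\) and \(\delta>0\) such that every \(z\in Y\) with \(d(z,y)\ge\varepsilon\) satisfies
\[
\max_i\bigl(d(a_i,z)-d(a_i,y)\bigr)\ge d(y,z)-\dots\quad\text{(a uniform triangle defect)}.
\]
In words: failure of the embedding means there are points \(z_k\), staying \(\varepsilon\)-far from \(y\), whose horofunctions \(h(z_k)\) converge pointwise to \(h(y)\). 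I will work with this sequential form. The map \(h_{x_0}\) is injective and continuous, and \(\overline X^{\rm h}\) is compact, so the embedding fails exactly when such a sequence exists.

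\emph{Necessity.} First, \(X\) is isometrically embedded in \(\mathcal P_1(X)\) via \(x\mapsto\delta_x\), and it is a closed subset. I would check that Gromov-compactifiability passes to closed subsets. It does not pass to arbitrary subsets, so the check has to be done in the sequential form. Take \(x\in X\) and \(z_k\in X\) with \(d(x,z_k)\ge\varepsilon\) and \(d(a,z_k)-d(x_0,z_k)\to d(a,x)-d(x_0,x)\) for all \(a\in X\). I must then show convergence against every measure \(\mu\), i.e.
\[
\int d(a,z_k)\,d\mu(a)-d(x_0,z_k)\;\longrightarrow\;\int d(a,x)\,d\mu(a)-d(x_0,x).
\]
The integrands converge pointwise and are bounded by \(d(a,x_0)\), which is \(\mu\)-integrable. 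Dominated convergence therefore transfers the failure from \(X\) to \(\mathcal P_1(X)\).

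Second, suppose \(X\) is unbounded and pick \(x_k\) with \(d(x_0,x_k)\to\infty\). Set
\[
\mu_k=(1-t_k)\delta_{x_0}+t_k\delta_{x_k},\qquad t_k=\frac{1}{d(x_0,x_k)},
\]
so that \(W_1(\mu_k,\delta_{x_0})=1\). For any \(\nu\), I expect
\[
W_1(\nu,\mu_k)-W_1(\delta_{x_0},\mu_k)\;\to\;W_1(\nu,\delta_{x_0})-1 .
\]
The heuristic is that moving a vanishing mass \(t_k\) out to \(x_k\) costs roughly \(t_k d(\cdot,x_k)\approx 1\) regardless of where it comes from, since \(d(y,x_k)-d(x_0,x_k)\) is bounded on the bulk of \(\nu\). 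This is exactly the horofunction of \(\delta_{x_0}\) shifted by a constant, so \(h(\mu_k)\to h(\delta_{x_0})\) even though \(\mu_k\) stays at distance \(1\). To verify the limit, bound \(W_1(\nu,\mu_k)\) from above by the coupling that sends a \(t_k\)-portion of \(\nu\) to \(x_k\) and the rest to \(x_0\). For the lower bound, use a Kantorovich potential built from \(d(\cdot,x_0)\) and \(d(\cdot,x_k)\), or compare optimal plans and use tightness of \(\nu\). This computation is the main obstacle on the necessity side.

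\emph{Sufficiency.} Assume \(X\) is bounded with diameter \(D\) and Gromov-compactifiable, and suppose \(h(\mu_k)\to h(\mu)\) pointwise. I want \(W_1(\mu_k,\mu)\to0\). The plan is:
\begin{itemize}
\item Testing against \(\nu=\mu\) gives \(W_1(\mu,\mu_k)-W_1(\delta_{x_0},\mu_k)\to -W_1(\delta_{x_0},\mu)\).
\item Tightness of \(\mu_k\) is the delicate point, because bounded Polish spaces are not compact. I would try to derive it from the finite-set criterion applied at finitely many points: mass escaping into a region \(\varepsilon\)-separated from a fixed finite net would produce, by averaging, a triangle defect \(W_1(\delta_{a},\mu_k)+\dots\) that is incompatible with convergence at suitably chosen finite test measures.
\item Concretely, I would show that for \(\mu\)-a.e.\ \(y\), the averaged defect \(\int\bigl(d(a_i,z)-d(a_i,y)\bigr)\) forces the mass of \(\mu_k\) far from \(\operatorname{supp}\mu\) to vanish. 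This should also yield tightness through finite \(\varepsilon\)-nets of a compact set that carries most of \(\mu\).
\item Given tightness and boundedness, Prokhorov's theorem gives subsequential weak limits \(\mu'\), and \(W_1\)-convergence follows because \(X\) is bounded.
\item Continuity of \(h\) gives \(h(\mu')=h(\mu)\). Injectivity of \(h\) then gives \(\mu'=\mu\), and so \(W_1(\mu_k,\mu)\to0\).
\end{itemize}
The tightness step, namely turning the averaged triangle defect into control of escaping mass, is where I expect the real work to lie.
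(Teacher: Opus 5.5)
\emph{Gap in sufficiency.} Your necessity arguments are essentially the results the paper cites: \cite[Proposition~3.16(ii)]{MR4952998} for the Dirac embedding and \cite[Theorem~1.1]{MR5096390} for unboundedness. The sufficiency direction, which is the only part the paper actually proves, has a genuine gap. You reduce it to tightness of $(\mu_k)$ and then leave that step as an expectation. The bullets never say which test measures to use, or along which coupling a triangle defect is averaged. Moreover, applying the finite-set criterion separately at $\mu$-a.e.\ $y$ gives witness sets $F_y$ that depend on $y$, so no finite family of test measures in $\mathcal P_1(X)$ ever appears.

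Two ideas are missing.

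First, use Dirac test measures. Since $\delta_w\otimes\lambda$ is the only coupling of $\delta_w$ and $\lambda$, the map $\lambda\mapsto W_1(\delta_w,\lambda)=\int d(w,\cdot)\,d\lambda$ is linear. Hence, for an optimal $\pi\in\Pi(\mu,\nu)$,
\[
W_1(\delta_w,\mu)+W_1(\mu,\nu)-W_1(\delta_w,\nu)=\int E_w(x,z)\,d\pi(x,z),
\qquad
E_w(x,z)=d(w,x)+d(x,z)-d(w,z)\ge0 .
\]
Your first bullet says exactly that the left side tends to $0$ for $w=x_0$ and $\nu=\mu_k$. What you needed was to write it as this integral and do the same for other $w$.

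Second, make the finite-set criterion uniform over a compact set. Since $|E_w(x,z)-E_w(u,z)|\le 2d(x,u)$, covering a compact $K$ by finitely many small balls gives a single finite $F\subset X$ and $\eta>0$ with $\max_{w\in F}E_w(x,z)\ge\eta$ whenever $x\in K$ and $d(x,z)\ge\rho$.

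\emph{How the argument closes.} With these two ideas you only need tightness of the single measure $\mu$; Prokhorov and injectivity of $h$ become unnecessary. Your sequential setting is legitimate because $\mathcal P_1(X)$ is separable (Proposition~\ref{pro:net_convergence}(ii)), not because $\overline X^{\rm h}$ is compact. Let $\pi_k\in\Pi(\mu,\mu_k)$ be optimal. Testing $h(\mu_k)\to h(\mu)$ at $\delta_w$ and at $\mu$ gives $\int E_w\,d\pi_k\to0$ for each $w\in F$. Hence $\pi_k\bigl((K\times X)\cap\{d\ge\rho\}\bigr)\to0$, and since $d\le\operatorname{diam}(X)$,
\[
\limsup_{k\to\infty}W_1(\mu,\mu_k)\le\rho+\operatorname{diam}(X)\,\mu(X\setminus K),
\]
which is arbitrarily small. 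The paper runs the same estimate through Proposition~\ref{pro:finite_set_criterion}, with witnesses $\delta_w$ for $w\in F$.

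\emph{Smaller points.}

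Your displayed form of the criterion has the inequality reversed. One needs some $i$ with $d(a_i,z)\le d(a_i,y)+d(y,z)-\delta$; as written, it is satisfied by any finite set containing $y$.

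Gromov-compactifiability does not pass to closed subsets in general. A complete space such as $L^\infty[0,1]$ embeds isometrically and closedly in some $\ell_\infty(\Gamma)$. The latter is Gromov-compactifiable by Theorem~\ref{thm4}, while $L^\infty[0,1]$ is not. Your dominated-convergence check for $x\mapsto\delta_x$ does not use this claim, so the argument there is unaffected.

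In the unbounded example the limit must be $W_1(\nu,\mu_k)-W_1(\delta_{x_0},\mu_k)\to W_1(\nu,\delta_{x_0})$; as written, $\nu=\delta_{x_0}$ gives $0=-1$. The lower bound you flag there is immediate. Every coupling of $\nu$ and $\mu_k$ has the form $\alpha\otimes\delta_{x_0}+\beta\otimes\delta_{x_k}$ with $\alpha+\beta=\nu$ and $\beta(X)=t_k$. Its cost is $W_1(\nu,\delta_{x_0})+1-\int E_y(x_0,x_k)\,d\beta(y)$, where $0\le E_y(x_0,x_k)\le 2d(y,x_0)$. Finally, $\int d(\cdot,x_0)\,d\beta\to0$ uniformly in $\beta$, by absolute continuity of the integral.
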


Daniilidis et al.\ proved that Gromov-compactifiability of \(\mathcal P_1(X)\) implies that of \(X\) \cite[Proposition~3.16(ii)]{MR4952998}, and they asked whether the converse holds \cite[p.~1153]{MR4952998}. Jiang and Cui later showed that unboundedness is an obstruction \cite[Theorem~1.1]{MR5096390}. We prove the remaining sufficiency statement for bounded Gromov-compactifiable spaces. The linear transport cost allows us to average the triangle defect along an optimal coupling, while tightness yields a finite family of witnesses in \(\mathcal P_1(X)\).

For interior exponents, no assumption beyond Polishness is required.

\begin{theorem}\label{thm:compactifiable_P_p}
Let \((X,d)\) be a Polish metric space and let \(p\in(1,\infty)\). Then \(\mathcal P_p(X)\) is Gromov-compactifiable.
\end{theorem}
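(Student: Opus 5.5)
The plan is to verify the sequential form of Gromov-compactifiability directly, and to exploit the gap between \(t\) and \(t^{1/p}\) for \(p>1\); that gap is where strict convexity of \(s\mapsto s^p\), the scalar shadow of uniform convexity of \(L^p\), enters. Fix \(\mu\in\mathcal P_p(X)\) and take it as base point. Then \(h_\mu(\nu)\to h_\mu(\mu)\) pointwise means
\[
W_p(\lambda,\nu)-W_p(\mu,\nu)\longrightarrow W_p(\lambda,\mu)\quad\text{for every }\lambda\in\mathcal P_p(X).
\]
The finite-set criterion of \cite{MR4952998} makes this sequential form sufficient. Concretely, it suffices to show the following: for every \(\varepsilon>0\) there are finitely many witnesses \(\lambda\) (I will try to get by with one) and a \(\delta>0\) such that \(W_p(\mu,\nu)\ge\varepsilon\) forces the triangle defect
\[
W_p(\lambda,\mu)+W_p(\mu,\nu)-W_p(\lambda,\nu)
\]
to be at least \(\delta\). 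I will argue by contradiction. Suppose \(\nu_n\) satisfies \(D_n:=W_p(\mu,\nu_n)\ge\varepsilon\) while the defect tends to \(0\) for each fixed witness.

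The witnesses will be mixtures \(\lambda_t=(1-t)\mu+t\delta_a\) with \(t\in(0,1)\) small and \(a\in X\) suitably chosen. The first step is an upper bound for \(W_p(\lambda_t,\nu_n)\) that is linear in \(t\), uniformly in \(n\). Joint convexity of \(W_p^p\) (mix the optimal plans) gives
\[
W_p^p(\lambda_t,\nu_n)\le(1-t)D_n^p+t\int d(a,\cdot)^p\,d\nu_n .
\]
The triangle inequality gives \(\bigl(\int d(a,\cdot)^p\,d\nu_n\bigr)^{1/p}\le D_n+c\), where \(c:=W_p(\mu,\delta_a)\). Applying the mean value theorem to \(s\mapsto s^{1/p}\) on \([D_n^p,\infty)\) then yields
\[
W_p(\lambda_t,\nu_n)\le D_n+t\,c\,(1+c/\varepsilon)^{p-1}.
\]
This bound is uniform in \(n\), including when \(D_n\to\infty\), because only \(D_n\ge\varepsilon\) is used. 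Hence the defect is at least \(W_p(\lambda_t,\mu)-Ct\) with \(C\) independent of \(n\).

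The second step, which I expect to be the main obstacle, is a lower bound \(W_p(\lambda_t,\mu)\ge\kappa\,t^{1/p}\) with \(\kappa>0\) independent of \(t\). Granting it, \(\kappa t^{1/p}-Ct>0\) for small \(t\) because \(p>1\), and this contradicts the vanishing of the defect. Note that at \(p=1\) the argument collapses, since \(W_1(\lambda_t,\mu)=t\,W_1(\mu,\delta_a)\) exactly; this is consistent with Theorem~\ref{thm:p1_compactifiable}.

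When \(a\) can be chosen with \(r:=\operatorname{dist}(a,\operatorname{supp}\mu)>0\), the lower bound is immediate. The extra mass \(t\) deposited at \(a\) must come from \(\operatorname{supp}\mu\), so \(W_p^p\ge t r^p\). The delicate case is \(\operatorname{supp}\mu=X\) with \(\mu\) diffuse near every point: there a mixture can draw its extra mass from a ball of radius \(r_t\to0\), and \(t^{1/p}\) is lost. For this case I would first try replacing the Dirac mass by a genuinely distinct perturbation. One option is to transport a fixed fraction of \(\mu\)'s mass, located in a compact set \(K\) with \(\mu(K)\ge 1/2\) obtained from tightness, a fixed distance \(r\) away, and then mix that with \(\mu\) with weight \(t\). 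The mixed-in component \(\sigma\) then satisfies \(W_p^p(\lambda_t,\mu)\gtrsim t\) by a Kantorovich-duality test function supported near \(\sigma\). The convexity bound of the first step goes through verbatim with \(\delta_a\) replaced by \(\sigma\). If a single witness still fails, I would use finitely many such \(\sigma_i\), indexed by a finite net of \(K\), and take the maximum of the defects, as the finite-set criterion allows.
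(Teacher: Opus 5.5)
Your first step is correct. Mixing an optimal plan in \(\Pi(\mu,\nu)\) with \(\delta_a\otimes\nu\) and using concavity of \(s\mapsto s^{1/p}\) gives \(W_p(\lambda_t,\nu)\le W_p(\mu,\nu)+t\,c(1+c/\varepsilon)^{p-1}\), uniformly over all \(\nu\) with \(W_p(\mu,\nu)\ge\varepsilon\). Suppose some \(a\) has \(r=\operatorname{dist}(a,\operatorname{supp}\mu)>0\). Then \(W_p(\lambda_t,\mu)\ge r\,t^{1/p}\), so for small \(t\) the single measure \(\lambda_t\) is a valid witness in Proposition~\ref{pro:finite_set_criterion}. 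This handles every \(\mu\) with \(\operatorname{supp}\mu\ne X\), in particular every Dirac measure. It is a variant of the last step of the paper's proof, which uses \(\frac12\delta_{x^*}+\frac12\delta_y\) and the monotonicity of \(f\).

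The gap is the remaining case \(\operatorname{supp}\mu=X\). There your repair rests on a false estimate. Take \(X=[0,1]\) and \(\mu\) Lebesgue measure. For any \(\sigma\), the distribution function of \(\lambda_t=(1-t)\mu+t\sigma\) lies between \((1-t)x\) and \((1-t)x+t\). Hence its quantile function is within \(t/(1-t)\) of the identity, and \(W_p(\lambda_t,\mu)\le W_\infty(\lambda_t,\mu)\le t/(1-t)\). So \(W_p^p(\lambda_t,\mu)\) is of order \(t^p\), not \(t\), however \(\sigma\) is built, and the \(t^{1/p}\)-versus-\(t\) gap on which everything rests disappears. These witnesses are genuinely useless, not just badly estimated. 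For \(\lambda_t=(1-t)\mu+t\delta_0\) and \(\nu=t\mu+(1-t)\delta_1\), the quantile functions satisfy \(Q_\mu=(1-t)Q_{\lambda_t}+tQ_\nu\). Thus \(\mu\) lies on a \(W_p\)-geodesic from \(\lambda_t\) to \(\nu\), and the triangle defect is exactly \(0\), while \(W_p(\mu,\nu)=(1-t)(p+1)^{-1/p}\). The theorem is trivial for compact \(X\), but the same \(O(t)\) behaviour occurs for a Gaussian on \(\mathbb R\) with the perturbations you describe. Taking maxima over a finite net of such mixtures does not address this.

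What is missing is an argument for non-Dirac \(\mu\); the full-support case is of this kind once \(X\) has two points. Here convexity of the scalar map \(s\mapsto s^p\) is not enough. The paper uses uniform convexity of \(L^p\) in vector form (Lemma~\ref{lem:asymptotic_dirac}), testing against Dirac witnesses \(\delta_{y_1},\delta_{y_2}\) with \(y_1\ne y_2\in\operatorname{supp}\mu\).
\begin{enumerate}
\item For \(\pi\in\Pi(\mu,\nu)\) optimal, \(W_p(\delta_{y_j},\nu)\le\|d(y_j,x)+d(x,z)\|_{L^p(\pi)}\). So a small defect forces near-equality in Minkowski's inequality.
\item Clarkson's inequalities then make \(d(y_j,x)/A_j\) close to \(d(x,z)/W_p(\mu,\nu)\) in \(L^p(\pi)\) for \(j=1,2\), where \(A_j=W_p(\delta_{y_j},\mu)\).
\item Since the first marginal of \(\pi\) is \(\mu\), this would force \(d(y_1,\cdot)/A_1=d(y_2,\cdot)/A_2\) on \(\operatorname{supp}\mu\), which fails at \(y_1\).
\end{enumerate}
The paper runs this sequentially via Proposition~\ref{pro:net_convergence}(ii). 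Normalizing with \(\min\{A_j,\varepsilon\}\) makes it uniform over \(W_p(\mu,\nu)\ge\varepsilon\), so it also yields finite witness sets. Thus Dirac witnesses suffice for non-Dirac \(\mu\), and your mixture argument is needed only for Dirac \(\mu\), where it works. Combining the two gives a complete proof.
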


The proof uses the uniform convexity of \(L^p\). Horofunction convergence produces asymptotic equality in Minkowski's inequality along optimal couplings, and Clarkson's inequalities show that the normalized distance functions converge to one another in \(L^p\). A comparison at two base points shows that the candidate limit is a Dirac measure. A two-point test measure then rules out a positive Wasserstein distance from that limit.

At the other endpoint, the answer is again governed by the base space.

\begin{theorem}\label{thm3}
Let \((X,d)\) be a Polish metric space. Then \(\mathcal P_\infty(X)\) is Gromov-compactifiable if and only if \(X\) is Gromov-compactifiable.
\end{theorem}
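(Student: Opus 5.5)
We prove the two implications separately. Both rely on the finite-set criterion of Daniilidis et al.\ \cite{MR4952998}. In the form we will use it, the criterion says that \(X\) is Gromov-compactifiable if and only if the following holds. For every \(x\in X\) and \(\varepsilon>0\) there are \(\delta>0\) and a finite set \(F\subset X\) such that every \(z\) with \(d(z,x)\ge\varepsilon\) satisfies
\[
\max_{y\in F}\bigl(d(y,x)+d(x,z)-d(y,z)\bigr)\ge\delta .
\]
In words, every point at distance at least \(\varepsilon\) from \(x\) is "seen" by some witness in \(F\) through a uniform triangle defect.

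\emph{Necessity.} We use the isometric embedding \(X\to\mathcal P_\infty(X)\), \(x\mapsto\delta_x\), since \(W_\infty(\delta_x,\delta_z)=d(x,z)\). The criterion transfers along this map only if the witnesses for \(\delta_x\) can be replaced by Dirac witnesses. So suppose \(\mathcal P_\infty(X)\) is compactifiable, and fix \(x\), \(\varepsilon\), and the finite witness family \(\{\nu_1,\dots,\nu_k\}\) with constant \(\delta\) given by the criterion at \(\delta_x\). For \(z\) with \(d(x,z)\ge\varepsilon\) we have \(W_\infty(\nu_i,\delta_z)=\max_{y\in\operatorname{supp}\nu_i}d(y,z)\), and likewise with \(z\) replaced by \(x\). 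Hence the defect equals
\[
\sup_{y\in S_i} d(y,x)+d(x,z)-\sup_{y\in S_i}d(y,z),
\qquad S_i:=\operatorname{supp}\nu_i .
\]
The suprema are only over bounded sets, so we cannot simply pick finitely many \(y\). Instead, for each \(i\) we choose \(y_i\in S_i\) with \(d(y_i,x)\ge\sup_{S_i}d(\cdot,x)-\delta/2\). Then for every \(z\) the defect at \(y_i\) is at least \(\delta/2\) minus \(\bigl(\sup_{S_i}d(\cdot,z)-d(y_i,z)\bigr)\). This correction term need not be small, so the real argument must be topological rather than purely metric. Suppose \(X\) is not compactifiable. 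Then there is a net \(z_\alpha\) with \(h(z_\alpha)\to h(x)\) pointwise while \(d(z_\alpha,x)\ge\varepsilon\). We must show that \(h(\delta_{z_\alpha})\to h(\delta_x)\) pointwise on \(\mathcal P_\infty(X)\). Pointwise convergence of \(d(y,z_\alpha)-d(x_0,z_\alpha)\) for each \(y\) does not directly control the supremum over a compact support. We handle this by approximating each \(\nu\) in \(W_\infty\) by a measure with finite support, which is possible when \(\operatorname{supp}\nu\) is totally bounded. The general case is the delicate point: it needs an equicontinuity argument, since the functions \(h(\cdot)\) are \(1\)-Lipschitz in the test variable. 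That equicontinuity converts pointwise convergence into uniform convergence on totally bounded sets. Since a bounded support need not be totally bounded, we may instead use the finite-set criterion directly with witnesses \(y\) and finitely supported measures.

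\emph{Sufficiency.} Assume \(X\) is compactifiable, fix \(\mu\in\mathcal P_\infty(X)\) and \(\varepsilon>0\), and let \(\nu\) satisfy \(W_\infty(\mu,\nu)\ge\varepsilon\). Take an optimal coupling \(\pi\). Then \(\pi\)-a.e.\ pairs satisfy \(d(x,z)\le W_\infty(\mu,\nu)\), and the set of pairs with \(d(x,z)\ge W_\infty(\mu,\nu)-\eta\) has positive \(\pi\)-mass \(m\). Pick such a pair \((x^*,z^*)\), with \(x^*\) near a point of a fixed finite \(\eta\)-net of \(\operatorname{supp}\mu\) if the support allows one. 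Apply the criterion at \(x^*\) to obtain a witness \(y\) with defect at least \(\delta\). Now build the mass redistribution witness. Start from \(\mu\), and move a small mass \(t\) sitting near \(x^*\) to the point \(y\); call the result \(\mu_y\). The aim is to show \(W_\infty(\mu_y,\mu)\approx d(y,x^*)\), while \(W_\infty(\mu_y,\nu)\le\max\bigl(W_\infty(\mu,\nu),d(y,z^*)\bigr)\). The defect is then at least \(\min(\delta,\varepsilon)\) up to errors of order \(\eta\). The family of all such \(\mu_y\), with \(y\) ranging over the finite witness sets attached to a finite net, is finite.

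The main obstacle is the supremum nature of \(W_\infty\) combined with supports that are possibly not totally bounded. Both the necessity transfer and the choice of a finite net for sufficiency depend on it. I would first try to restrict to the point \(x^*\) realizing near-maximal displacement, and use only pairs of Dirac and two-point measures. This avoids needing any net of the whole support.
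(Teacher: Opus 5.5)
\emph{Necessity.} Your first, metric attempt was already a complete proof. You abandoned it because of a sign error. From \(d(y_i,x)\ge\sup_{S_i}d(\cdot,x)-\delta/2\) one gets
\[
d(y_i,x)+d(x,z)-d(y_i,z)\ge\Bigl(\sup_{S_i}d(\cdot,x)+d(x,z)-\sup_{S_i}d(\cdot,z)\Bigr)-\frac{\delta}{2}+\Bigl(\sup_{S_i}d(\cdot,z)-d(y_i,z)\Bigr).
\]
The correction term therefore enters with a plus sign, and it is nonnegative because \(y_i\in S_i\). So the defect at \(y_i\) is at least \(\delta/2\) whenever \(\nu_i\) has defect at least \(\delta\) at \(\delta_z\), and \(\{y_1,\dots,y_k\}\) satisfies the criterion in \(X\). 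This is exactly the paper's proof, which writes it as \(d(w_i,z)\le W_\infty(\sigma_i,\delta_z)\).

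The net argument you substituted is left unfinished, and the obstacle you see there is not real. By the triangle inequality, \(\sup_S d(\cdot,z_\alpha)-d(x,z_\alpha)\le\sup_S d(\cdot,x)\). For each \(y\in S\) the same quantity is at least \(d(y,z_\alpha)-d(x,z_\alpha)\to d(y,x)\). So no equicontinuity or total boundedness is needed. As submitted, however, necessity is not proved.

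\emph{Sufficiency.} Here the gaps are substantive.

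(a) The witnesses must depend only on \(\mu\) and \(r\). Your \(x^*\), and hence \(y\) and \(\mu_y\), are read off the optimal coupling with \(\nu\). The net fallback is unavailable when \(\operatorname{supp}\mu\) is not totally bounded, and your final paragraph does not remove the dependence on \(\nu\).

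(b) The estimate \(W_\infty(\mu_y,\mu)\approx d(y,x^*)\) is false, because the displaced mass can be absorbed by a chain of small shifts. Take \(\mu\) uniform on \([0,1]\subset\mathbb R\) and move mass \(t<\eta\) uniformly from \([0,\eta]\) to \(y=1\). Comparing quantile functions gives \(W_\infty(\mu_y,\mu)=t\). This fits your setup: for \(\nu\) the translate of \(\mu\) by \(r>0\), one has \(x^*\) near \(0\), \(z^*\) near \(r\), and \(y=1\) is a legitimate witness.

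(c) The bound \(W_\infty(\mu_y,\nu)\le\max\{W_\infty(\mu,\nu),d(y,z^*)\}\) ignores that \(\pi\) sends the mass near \(x^*\) to many targets. The witness chosen for \(z^*\) need not give any defect for those other targets.

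The paper handles these as follows. It fixes one arbitrary \(x\in\operatorname{supp}\mu\), with witnesses \(w_1,\dots,w_m\) at radius \(r/3\) and \(R_k=d(x,w_k)\). The measure \(\sigma_k\) moves \emph{all} of \(\mu|_{C_k}\), where \(C_k=\overline B(w_k,R_k-\rho)\), together with the fixed fraction \(\frac1{m+1}\mu|_A\), \(A=B(x,\rho)\), to \(w_k\). Because the atom then outweighs \(\mu(C_k)\), this forces \(W_\infty(\sigma_k,\mu)>R_k-\rho\) (Lemma~\ref{lem:radial_mass_bound}), which defeats (b).

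For a given \(\nu\), the pairs starting in \(A\) are covered by \(H_0\) (displacement at most \(W_\infty(\mu,\nu)-r/3\)) and \(H_1,\dots,H_m\) (target seen by \(w_k\)). Pigeonhole gives a portion \(\theta\) of mass \(\mu(A)/(m+1)\) served by a single \(w_k\), which handles (c). The product term \(\frac1{\mu(A)}\mu|_A\otimes\tau\) in \(\Gamma\) reconciles the \(\nu\)-independent removal with the \(\nu\)-dependent \(\theta\). The short-displacement alternative \(H_0\) is what lets an arbitrary \(x\) suffice. That removes (a): no near-maximal point and no net of the support are needed.
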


If \(X\) has at least two points, then \((\mathcal P_\infty(X),W_\infty)\) is nonseparable. We therefore use the finite-set condition rather than a sequential argument. For each witness in \(X\), we construct a measure by moving a small portion of the base measure to that witness. A radial mass estimate and a coupling adapted to the resulting measures provide the required triangle defect.

Thus the interior range \(1<p<\infty\) differs sharply from the two endpoints. At \(p=1\), boundedness and Gromov-compactifiability of \(X\) are both needed, while at \(p=\infty\) only the latter remains.

Section~\ref{S5} records the corresponding atom criterion for real \(L^\infty\) spaces. For the localizable case, the criterion follows by combining \cite[Corollary~6.7]{CiaciLangemetsLissitsin2022} with \cite[Theorem~3.4]{MR4952998}. Our proof uses the finite-set condition and applies to arbitrary measure spaces.

The paper is organized as follows. Section~\ref{S1} contains the preliminaries. Sections~\ref{S2}--\ref{S4} prove Theorems~\ref{thm:p1_compactifiable}--\ref{thm3}, and Section~\ref{S5} treats \(L^\infty\).

\section{Preliminaries}\label{S1}

\subsection{Horofunction extension}
Fix a metric space \((X,d)\) and \(x_0\in X\). Let
\[
\operatorname{Lip}^1_{x_0}(X)
:=
\Set{f\colon X\to\mathbb R}
{f(x_0)=0,\ \operatorname{Lip}(f)\leq 1},
\]
endowed with the topology of pointwise convergence. The horofunction map
\[
h_{x_0}\colon X\to \operatorname{Lip}^1_{x_0}(X),
\qquad
h_{x_0}(z)(x):=d(x,z)-d(x_0,z),
\]
is continuous and injective. We write \(h_z:=h_{x_0}(z)\) for \(z\in X\). The horofunction extension \(\overline X^{\rm h}\) is the closure of \(h_{x_0}(X)\) in \(\operatorname{Lip}^1_{x_0}(X)\). Up to the canonical identification, it does not depend on the choice of \(x_0\) \cite{MR4952998}.

For every \(f\in\operatorname{Lip}^1_{x_0}(X)\),
\[
|f(x)|\leq d(x,x_0)\qquad\text{for all }x\in X,
\]
so
\[
\operatorname{Lip}^1_{x_0}(X)
\subset
\prod_{x\in X}[-d(x,x_0),d(x,x_0)].
\]
The set on the left is closed in the product topology, and the product is compact by Tychonoff's theorem. Hence \(\overline X^{\rm h}\) is compact in the topology of pointwise convergence. The question is whether \(h_{x_0}\) induces the original topology on its image. Equivalently, \(X\) is Gromov-compactifiable if and only if \(h_{x_0}\) is a topological embedding.

We first record a convergence criterion for Gromov-compactifiability; later we shall use the finite-set characterization from \cite{MR4952998}.

\begin{proposition}\label{pro:net_convergence}
Let \((X,d)\) be a metric space.
\begin{enumerate}[\rm (i)]
\item \(X\) is Gromov-compactifiable if and only if, for every \(x\in X\) and every net \((z_i)_{i\in I}\) in \(X\), the condition
\[
\lim_{i\in I}\big(d(y,z_i)-d(x,z_i)\big)=d(y,x)
\qquad\text{for every }y\in X
\]
implies
\[
\lim_{i\in I}d(z_i,x)=0.
\]

\item If \(X\) is separable, then \(X\) is Gromov-compactifiable if and only if, for every \(x\in X\) and every sequence \((z_n)\) in \(X\), the condition
\[
\lim_{n\to\infty}\big(d(y,z_n)-d(x,z_n)\big)=d(y,x)
\qquad\text{for every }y\in X
\]
implies
\[
\lim_{n\to\infty}d(z_n,x)=0.
\]
\end{enumerate}
\end{proposition}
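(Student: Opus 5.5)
Since \(h_{x_0}\) is continuous and injective, Gromov-compactifiability is equivalent to the continuity of the inverse \(h_{x_0}(X)\to X\), where the image carries the pointwise topology. In pointwise topologies this is exactly a net statement: \(h_{x_0}^{-1}\) is continuous at \(h_x\) if and only if, for every net \((z_i)\), \(h_{z_i}\to h_x\) pointwise implies \(z_i\to x\). So I would first rewrite the pointwise convergence \(h_{z_i}\to h_x\) in the form stated in (i). For every \(y\),
\[
h_{z_i}(y)-h_{z_i}(x)=d(y,z_i)-d(x,z_i),
\]
and \(h_x(y)-h_x(x)=d(y,x)\). Hence \(h_{z_i}\to h_x\) pointwise implies the displayed condition. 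Conversely, suppose the condition holds for all \(y\). Applying it at \(y=x_0\) gives \(d(x_0,z_i)-d(x,z_i)\to d(x_0,x)\), i.e.\ \(h_{z_i}(x)\to -d(x_0,x)=h_x(x)\). Adding this to the displayed limit gives \(h_{z_i}(y)\to h_x(y)\) for every \(y\). Thus the condition in (i) is equivalent to \(h_{z_i}\to h_x\), and (i) follows from the net characterization of continuity of the inverse map.

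For (ii), necessity is immediate from (i), since sequences are nets. For sufficiency, the difficulty is that the pointwise topology on \(\operatorname{Lip}^1_{x_0}(X)\) is not first countable in general, so continuity cannot be tested by sequences directly. Separability resolves this. Fix a countable dense set \(D\subset X\) containing \(x_0\). On \(\operatorname{Lip}^1_{x_0}(X)\) the family is equicontinuous (all members are \(1\)-Lipschitz), so pointwise convergence on \(D\) implies pointwise convergence on \(X\) by an \(\varepsilon/3\) argument. Therefore the pointwise topology on \(\operatorname{Lip}^1_{x_0}(X)\) coincides with the topology of pointwise convergence on \(D\). That topology is metrizable, for instance by \(\sum_k 2^{-k}\min(1,|f(q_k)-g(q_k)|)\), so it is first countable.

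Now suppose the sequential condition holds but \(h_{x_0}^{-1}\) is discontinuous at some \(h_x\). Then there is \(\varepsilon>0\) such that every neighbourhood of \(h_x\) contains some \(h_z\) with \(d(z,x)\ge\varepsilon\). Taking a countable neighbourhood base \(U_n\) in the metrizable topology above yields a sequence \(z_n\) with \(h_{z_n}\to h_x\) pointwise and \(d(z_n,x)\ge\varepsilon\). By the equivalence from the first paragraph, \((z_n)\) satisfies the displayed condition, so the hypothesis forces \(d(z_n,x)\to 0\), a contradiction.

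The only step with real content is identifying the pointwise topology with a metrizable one via equicontinuity. Everything else is a rewriting of the definition.
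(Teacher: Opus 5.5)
Your proof is correct and follows the paper's argument. Part (i) is the same rewriting of the displayed condition as pointwise convergence \(h_{z_i}\to h_x\) (using the case \(y=x_0\)), combined with the net criterion for continuity of \(h_{x_0}^{-1}\); part (ii) likewise reduces to sequential continuity via metrizability, except that you prove the metrizability of the pointwise topology on \(\operatorname{Lip}^1_{x_0}(X)\) directly by equicontinuity and a countable dense set, where the paper cites \cite[Proposition~1.2(ii)]{MR4952998}.
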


\begin{proof}
Fix \(x\in X\) and a net \((z_i)_{i\in I}\) in \(X\). Suppose first that
\[
\lim_{i\in I}\big(d(y,z_i)-d(x,z_i)\big)=d(y,x)
\qquad\text{for every }y\in X.
\]
Then, for every \(y\in X\),
\begin{align*}
h_{z_i}(y)-h_x(y)
&=
\big(d(y,z_i)-d(x,z_i)-d(y,x)\big)\\
&\qquad-
\big(d(x_0,z_i)-d(x,z_i)-d(x_0,x)\big)
\to0.
\end{align*}
Thus \(h_{z_i}\to h_x\) pointwise.

Conversely, if \(h_{z_i}\to h_x\) pointwise, then, for every \(y\in X\),
\[
d(y,z_i)-d(x,z_i)
=
h_{z_i}(y)-h_{z_i}(x)
\to
h_x(y)-h_x(x)
=
d(y,x).
\]
Hence the condition in \textup{(i)} is equivalent to pointwise convergence \(h_{z_i}\to h_x\).

Since \(h_{x_0}: X\to h_{x_0}(X)\) is continuous and injective, \(X\) is Gromov-compactifiable if and only if the inverse
\[
h_{x_0}^{-1}: h_{x_0}(X)\to X
\]
is continuous. By the net criterion for continuity, this is equivalent to
\[
h_{z_i}\to h_x
\quad\text{implies}\quad
z_i\to x.
\]
Together with the preceding equivalence, this proves \textup{(i)}.

Now suppose that \(X\) is separable. By \cite[Proposition~1.2(ii)]{MR4952998}, \(\overline X^{\rm h}\), and hence \(h_{x_0}(X)\), is metrizable. Therefore \(h_{x_0}^{-1}\) is continuous if and only if it is sequentially continuous. Applying the preceding argument to sequences proves \textup{(ii)}.
\end{proof}

\begin{proposition}[{\cite[Remark~2.2(c)]{MR4952998}}]\label{pro:finite_set_criterion}
A metric space \((X,d)\) is Gromov-compactifiable if and only if for every \(x\in X\) and \(r>0\), there exist a finite set \(F\subset X\) and \(\eta>0\) such that for every \(z\in X\) with \(d(z,x)\geq r\), there is some \(w\in F\) satisfying
\[
d(w,z)\leq d(w,x)+d(x,z)-\eta.
 \]
 \end{proposition}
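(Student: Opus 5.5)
Both directions go through the net criterion of Proposition~\ref{pro:net_convergence}(i), so no metrizability is needed.

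\emph{Sufficiency.} Assume the finite-set condition and fix \(x\in X\) and \(r>0\), with \(F\) and \(\eta\) as given. Suppose the condition fails. Then for every finite \(F\subset X\) and every \(\eta>0\) there is a point \(z\) with \(d(z,x)\ge r\) such that
\[
d(w,z)>d(w,x)+d(x,z)-\eta \quad\text{for all } w\in F.
\]
Index by pairs \((F,\eta)\), directed by inclusion in \(F\) and decrease in \(\eta\). This yields a net \((z_{F,\eta})\). For each fixed \(y\), once \(y\in F\) we get
\[
d(y,x)\ge d(y,z)-d(x,z)>d(y,x)-\eta,
\]
where the first inequality is the triangle inequality. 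Hence \(d(y,z_{F,\eta})-d(x,z_{F,\eta})\to d(y,x)\) for every \(y\). Proposition~\ref{pro:net_convergence}(i) then gives \(z_{F,\eta}\to x\). This contradicts \(d(z_{F,\eta},x)\ge r\), so the condition holds.

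\emph{Necessity.} Assume the finite-set condition holds. Let \((z_i)\) be a net with
\[
d(y,z_i)-d(x,z_i)\to d(y,x) \quad\text{for every } y,
\]
and suppose \(d(z_i,x)\not\to0\). Then there is \(r>0\) such that \(d(z_i,x)\ge r\) frequently. Take \(F\) and \(\eta\) for this \(x\) and \(r\). For each such \(i\), some \(w\in F\) satisfies
\[
d(w,z_i)-d(x,z_i)\le d(w,x)-\eta.
\]
Since \(F\) is finite, a single \(w\in F\) works for cofinally many \(i\). That contradicts the convergence \(d(w,z_i)-d(x,z_i)\to d(w,x)\) at this \(w\). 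So \(z_i\to x\), and Proposition~\ref{pro:net_convergence}(i) gives Gromov-compactifiability.

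The main obstacle is bookkeeping with nets: the directed set in the sufficiency argument, and in the necessity argument passing to a cofinal subset on which one witness \(w\in F\) works.
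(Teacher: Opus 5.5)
The paper gives no proof of this proposition; it imports it from \cite[Remark~2.2(c)]{MR4952998}. Your argument is a correct self-contained proof. It rests only on Proposition~\ref{pro:net_convergence}(i), which the paper proves independently, so there is no circularity.

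The mechanism in both directions is the one-sided bound \(d(y,z)-d(x,z)\le d(y,x)\). Because of it, one-sided closeness of \(d(w,z)-d(x,z)\) to \(d(w,x)\) for all \(w\) is the same as horofunction convergence to \(h_x\). Hence the failure of the finite-set condition at \((x,r)\) is exactly a net that converges to \(h_x\) while staying outside \(B(x,r)\). Your directed set of pairs \((F,\eta)\) is just the basic pointwise neighbourhoods of \(h_x\) written as a net.

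The cofinal pigeonhole step in your second paragraph is also sound. There \(F\neq\varnothing\) is automatic, since some \(z_i\) satisfies \(d(z_i,x)\ge r\). Because you work with nets throughout, no separability is needed. This matters, since the paper applies the criterion to the nonseparable spaces \(\mathcal P_\infty(X)\) and \(L^\infty(\Omega)\).

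One slip needs correcting. Your paragraph labelled \emph{Sufficiency} actually proves that Gromov-compactifiability implies the finite-set condition. It must open with ``Assume \(X\) is Gromov-compactifiable'', since that hypothesis is what licenses the final appeal to Proposition~\ref{pro:net_convergence}(i). As written, it assumes the finite-set condition and then supposes that the condition fails. Correspondingly, the paragraph labelled \emph{Necessity} proves the converse, so the two labels are swapped relative to the statement.
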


\subsection{Wasserstein spaces}\label{section2.1}

We recall the notation used below; see
\cite[\S5.1]{MR2401600} and \cite[Chapter~1]{MR2459454}.
Let \((X,d)\) be a Polish metric space, and let \(\mathcal P(X)\) denote
the set of Borel probability measures on \(X\). If \(\lambda\) is a finite
Borel measure on a metric space and \(E\) is Borel, we write
\(\lambda|_E\) for the restriction of \(\lambda\) to \(E\), that is,
\[
(\lambda|_E)(B):=\lambda(B\cap E)
\]
for every Borel set \(B\).

For \(\mu,\nu\in\mathcal P(X)\), let \(\Pi(\mu,\nu)\) be the set of
\(\pi\in\mathcal P(X\times X)\) satisfying
\[
(\rho_1)_\#\pi=\mu,
\qquad
(\rho_2)_\#\pi=\nu,
\]
where \(\rho_1,\rho_2\colon X\times X\to X\) are the coordinate
projections. We shall use the identities
\[
(\rho_1)_\#\big(\pi|_{E\times X}\big)
=
\big((\rho_1)_\#\pi\big)|_E
\]
and, for finite Borel measures \(\alpha,\beta\) on \(X\),
\[
(\rho_1)_\#(\alpha\otimes\beta)
=
\beta(X)\alpha,
\qquad
(\rho_2)_\#(\alpha\otimes\beta)
=
\alpha(X)\beta.
\]

For \(p\in[1,\infty)\), the \(p\)-Wasserstein space is 
\[
\mathcal P_p(X)
:=
\Set{\mu\in\mathcal P(X)}
{\int_X d(x,x_0)^p\,d\mu(x)<\infty
\text{ for some }x_0\in X}.
\]
This definition does not depend on the choice of \(x_0\). The \(p\)-Wasserstein distance is
\[
W_p(\mu, \nu) := \bigg( \inf_{\pi \in \Pi(\mu, \nu)} \int_{X\times X} d(x, y)^p \, d\pi(x, y) \bigg)^{1/p}.
\]
For \(1\leq p<\infty\), the infimum is attained by an optimal coupling; see \cite[Theorem~4.1]{MR2459454}. For \(p=\infty\), let \(\mathcal P_\infty(X)\) consist of those \(\mu\in\mathcal P(X)\) whose support \(\operatorname{supp}(\mu)\) is bounded. For a nonzero finite Borel measure \(\pi\) on \(X\times X\), let \(\|\pi\|_\infty\) be the essential supremum of the function \((x,y)\mapsto d(x,y)\) with respect to \(\pi\). Since \(d\) is continuous,
\[
\|\pi\|_\infty
=
\sup_{(x,y)\in\operatorname{supp}(\pi)}d(x,y).
\]
We set \(\|0\|_\infty:=0\).
The \(\infty\)-Wasserstein distance is
\[
W_\infty(\mu,\nu)
:=
\inf_{\pi\in\Pi(\mu,\nu)}\|\pi\|_\infty;
\]
see, for example, \cite{MR2403310}. The infimum is attained. Indeed,
\(\Pi(\mu,\nu)\) is weakly compact, and for every \(R\ge0\),
\[
\Set{\pi\in\Pi(\mu,\nu)}{\|\pi\|_\infty\leq R}
=
\Set{\pi\in\Pi(\mu,\nu)}{\pi(\{d>R\})=0}
\]
is weakly closed by the Portmanteau theorem, since \(\{d>R\}\) is open.
Thus \(\pi\mapsto\|\pi\|_\infty\) is weakly lower semicontinuous and
attains its minimum on \(\Pi(\mu,\nu)\). 

\begin{proposition}\label{pro:topology_wasserstein}
Let \((X,d)\) be a Polish metric space.
\begin{enumerate}[\rm (i)]
    \item For every \(p\in[1,\infty)\), the Wasserstein space \((\mathcal P_p(X),W_p)\) is a Polish metric space.
    \item If \(X\) contains at least two points, the \(\infty\)-Wasserstein space \((\mathcal P_\infty(X),W_\infty)\) is not separable.
\end{enumerate}
\end{proposition}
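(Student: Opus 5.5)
Part (i) is standard, and I would quote it rather than reprove it. Completeness and separability of \((\mathcal P_p(X),W_p)\) for Polish \(X\) are classical; see \cite[Theorem~6.18]{MR2459454} or \cite[Proposition~7.1.5]{MR2401600}. If a sketch is wanted, completeness goes as follows. A \(W_p\)-Cauchy sequence is tight, since its \(p\)-th moments are uniformly integrable. Prokhorov's theorem then gives a weakly convergent subsequence. Lower semicontinuity of \(W_p\) under weak convergence upgrades this to \(W_p\)-convergence. For separability, finitely supported measures with rational weights on a countable dense subset of \(X\) form a countable \(W_p\)-dense set. Density holds because any \(\mu\in\mathcal P_p(X)\) can be truncated to a bounded set at small cost in \(W_p\), and the truncated measure can then be quantized onto finitely many points of the dense set.

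For (ii), I would exhibit an uncountable \(W_\infty\)-separated family. Fix two distinct points \(a,b\in X\) and set \(\delta:=d(a,b)>0\). For \(t\in[0,1]\), let
\[
\mu_t:=(1-t)\delta_a+t\delta_b\in\mathcal P_\infty(X).
\]
I claim that \(W_\infty(\mu_s,\mu_t)=\delta\) whenever \(s\neq t\). The upper bound holds because both measures are supported in \(\{a,b\}\), so every coupling has \(\|\pi\|_\infty\le\delta\). For the lower bound, suppose \(s<t\) and let \(\pi\in\Pi(\mu_s,\mu_t)\). Then \(\pi\) is supported in \(\{a,b\}^2\). Its marginal conditions give
\[
\pi(\{(a,b)\})\ \ge\ \mu_t(\{b\})-\mu_s(\{b\})=t-s>0.
\]
Hence \((a,b)\in\operatorname{supp}(\pi)\), and so \(\|\pi\|_\infty\ge\delta\).

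It follows that the open balls of radius \(\delta/2\) around the \(\mu_t\) are pairwise disjoint. There are uncountably many of them, so no countable set can be dense, and \(\mathcal P_\infty(X)\) is not separable. The only step requiring care is the marginal computation for the lower bound; the rest is routine.
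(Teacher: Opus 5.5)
Your proposal is correct and matches the paper's proof: part (i) is quoted from \cite[Theorem~6.18]{MR2459454}, and part (ii) uses the same two-point family \(\mu_t=(1-t)\delta_a+t\delta_b\) with the same marginal bound \(\pi(\{(a,b)\})\ge t-s>0\). Your extra upper bound \(W_\infty(\mu_s,\mu_t)\le d(a,b)\) is true but not needed. One minor caveat concerns only your optional completeness sketch: uniform integrability of \(p\)-th moments does not by itself give tightness when bounded subsets of \(X\) are not relatively compact (consider \(\delta_{e_n}\) in \(\ell^2\)), so tightness of a \(W_p\)-Cauchy sequence should instead come, as in Villani's proof, from its \(W_1\)-closeness to finitely many tight measures, which yields totally bounded sets of almost full mass.
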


\begin{proof}
Part (i) is standard; see \cite[Theorem~6.18]{MR2459454}.

For (ii), choose distinct points \(x_1,x_2\in X\). For \(t\in[0,1]\), let
\[
\mu_t:=(1-t)\delta_{x_1}+t\delta_{x_2}\in\mathcal P_\infty(X).
\]
If \(0\leq s<t\le1\) and \(\pi\in\Pi(\mu_s,\mu_t)\), then the marginal conditions give
\[
    \pi(\{x_1\} \times \{x_2\}) = t - \pi(\{x_2\} \times \{x_2\}).
\]
Since
\[
\pi(\{x_2\}\times\{x_2\})
\le
\pi(\{x_2\}\times X)
=
s,
\]
we have
\[
\pi(\{x_1\}\times\{x_2\})\geq t-s>0.
\]
Thus \((x_1,x_2)\in\operatorname{supp}(\pi)\), so \(\|\pi\|_\infty\geq d(x_1,x_2)\). Taking the infimum over all couplings \(\pi\in \Pi(\mu_s,\mu_t)\) gives \(W_\infty(\mu_s,\mu_t)\geq d(x_1,x_2)>0\) for all \(0\leq s<t\le1\). Hence \(\mathcal P_\infty(X)\) is not separable.
\end{proof}

\section{The case \texorpdfstring{\(p=1\)}{p=1}}\label{S2}

The necessity of Theorem~\ref{thm:p1_compactifiable} follows from \cite[Proposition~3.16(ii)]{MR4952998} and \cite[Theorem~1.1]{MR5096390}. We prove the converse using tightness and a uniform form of Proposition~\ref{pro:finite_set_criterion} on compact subsets of \(X\).

Recall that a probability measure \(\mu\) on a metric space \(X\) is tight if, for every \(\varepsilon>0\), there exists a compact set \(K\subset X\) such that \(\mu(K)>1-\varepsilon\).

\begin{lemma}[{\cite[Theorem~1.4]{MR233396}}]\label{lem1_2}
If \((X,d)\) is a Polish metric space, then every Borel probability measure on \(X\) is tight.
\end{lemma}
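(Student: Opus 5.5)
This is Ulam's theorem (cited as \cite[Theorem~1.4]{MR233396}), and I will prove it by the classical route: separability lets me cover \(X\) by finitely many small balls at each scale, and completeness makes the resulting totally bounded closed set compact. Fix \(\mu\in\mathcal P(X)\) and \(\varepsilon>0\). Since \(X\) is separable, choose a countable dense set \(\{x_j\}_{j\ge1}\subset X\) (the case \(X=\emptyset\) is vacuous). For each \(k\geq1\), the closed balls \(\overline B(x_j,1/k)\), \(j\ge1\), cover \(X\). By continuity from below of \(\mu\),
\[
\mu\Big(\bigcup_{j=1}^{n}\overline B(x_j,1/k)\Big)\to 1 \qquad (n\to\infty).
\]
So we can pick \(n_k\) with
\[
\mu\big(X\setminus A_k\big)<\varepsilon 2^{-k},
\qquad
A_k:=\bigcup_{j=1}^{n_k}\overline B(x_j,1/k).
\]

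Next, set \(K:=\bigcap_{k\ge1}A_k\). This is a closed set, being an intersection of finite unions of closed balls. It is also totally bounded: for every \(k\), it is covered by the finitely many balls of radius \(1/k\) making up \(A_k\). A closed subset of a complete metric space is complete, and a complete totally bounded metric space is compact, so \(K\) is compact. Finally,
\[
\mu(X\setminus K)\le\sum_{k\ge1}\mu(X\setminus A_k)<\sum_{k\ge1}\varepsilon2^{-k}=\varepsilon,
\]
which gives \(\mu(K)>1-\varepsilon\).

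There is no serious obstacle here. The only points that need care are the use of closed balls, so that \(K\) is closed and hence complete, and the completeness of \(X\) in the step from totally bounded to compact. Separability is used only to make each cover countable, which is what lets continuity from below produce the finite subcovers \(A_k\).
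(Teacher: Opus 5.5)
Your proof is correct and is essentially the standard argument for Ulam's theorem given in the cited source \cite[Theorem~1.4]{MR233396}; the paper does not reprove the lemma and simply invokes that reference. Your two careful points, using closed balls so that \(K\) is closed and hence complete, and the \(\varepsilon 2^{-k}\) bookkeeping that yields the strict inequality \(\mu(K)>1-\varepsilon\) required by the paper's definition of tightness, are handled correctly.
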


For \(w,x,z\in X\), set
\[
E_w(x,z):=d(w,x)+d(x,z)-d(w,z)\ge0.
\]
Since the only coupling of \(\delta_w\) and \(\lambda\in\mathcal P_1(X)\) is \(\delta_w\otimes\lambda\),
\[
W_1(\delta_w,\lambda)=\int_X d(w,x)\,d\lambda(x).
\]
Thus, if \(\pi\in\Pi(\mu,\nu)\) is optimal, then
\begin{align}
\Delta_w(\mu,\nu)
&:=
W_1(\delta_w,\mu)+W_1(\mu,\nu)-W_1(\delta_w,\nu)\nonumber\\
&=
\int_{X\times X}
\big(d(w,x)+d(x,z)-d(w,z)\big)\,d\pi(x,z)\nonumber\\
&=
\int_{X\times X}E_w(x,z)\,d\pi(x,z).
\label{eq:p1_defect_identity}
\end{align}

\begin{lemma}\label{lem:uniform_defect}
Let \(X\) be Gromov-compactifiable, let \(K\subset X\) be a nonempty compact set, and let \(\rho>0\). Then there exist a nonempty finite set \(F\subset X\) and \(\eta>0\) such that
\[
\max_{w\in F}E_w(x,z)\ge\eta
\]
whenever \(x\in K\) and \(d(x,z)\ge\rho\).
\end{lemma}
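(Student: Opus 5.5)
The plan is to upgrade the pointwise finite-set criterion, Proposition~\ref{pro:finite_set_criterion}, to a uniform statement on \(K\) by a compactness argument. The key observation is that \(E_w(x,z)\) is \(2\)-Lipschitz in \(x\) for fixed \(w,z\):
\[
|E_w(x,z)-E_w(x',z)|\le 2d(x,x').
\]
Thus a defect witnessed at one point \(x\) persists, with a small loss, at nearby points \(x'\).

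For each \(x\in K\), apply Proposition~\ref{pro:finite_set_criterion} with the point \(x\) and radius \(r=\rho/2\). This gives a finite set \(F_x\) and \(\eta_x>0\) such that every \(z\) with \(d(z,x)\ge\rho/2\) has some \(w\in F_x\) with \(E_w(x,z)\ge\eta_x\). Set
\[
\delta_x:=\min\{\rho/2,\ \eta_x/4\}.
\]
Now take \(x'\in B(x,\delta_x)\) and \(z\) with \(d(x',z)\ge\rho\). Then
\[
d(x,z)\ge\rho-\delta_x\ge\rho/2,
\]
so there is \(w\in F_x\) with \(E_w(x,z)\ge\eta_x\). The Lipschitz estimate then gives
\[
E_w(x',z)\ge\eta_x-2\delta_x\ge\eta_x/2.
\]

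By compactness, cover \(K\) by finitely many balls \(B(x_j,\delta_{x_j})\), \(j=1,\dots,m\). Then set
\[
F:=\bigcup_j F_{x_j},\qquad \eta:=\min_j \eta_{x_j}/2.
\]
If the union happens to be empty, adjoin any point so that \(F\) is nonempty; since \(E_w\ge0\), this does not affect the conclusion. For \(x\in K\) and \(d(x,z)\ge\rho\), pick \(j\) with \(x\in B(x_j,\delta_{x_j})\). The previous paragraph yields \(w\in F_{x_j}\subset F\) with \(E_w(x,z)\ge\eta\), as required.

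There is no real obstacle. The only point requiring care is choosing the radius \(\rho/2\), rather than \(\rho\), in the pointwise criterion, so that the condition \(d(z,x)\ge\rho/2\) still holds after moving the base point by up to \(\delta_x\).
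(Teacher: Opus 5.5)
Your proof is correct and follows the paper's argument essentially verbatim. Both apply Proposition~\ref{pro:finite_set_criterion} at each point of \(K\) with radius \(\rho/2\), use the \(2\)-Lipschitz dependence of \(E_w(x,z)\) on \(x\) to carry the defect, halved, to a small ball, and take a finite subcover. The only differences are cosmetic: your ball radius is \(\min\{\rho/2,\eta_x/4\}\) instead of the paper's \(\frac14\min\{\rho,\eta_u\}\), and you handle the possibility of an empty \(F\) explicitly, which the paper passes over.
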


\begin{proof}
For each \(u\in K\), Proposition~\ref{pro:finite_set_criterion}, applied at \(u\) with radius \(\rho/2\), gives a nonempty finite set \(F_u\subset X\) and \(\eta_u>0\) such that
\[
\max_{w\in F_u}E_w(u,z)\ge\eta_u
\qquad\text{whenever }d(u,z)\ge\frac{\rho}{2}.
\]
Set \(a_u:=\frac14\min\{\rho,\eta_u\}\). For \(w,u,x,z\in X\),
\[
|E_w(x,z)-E_w(u,z)|
\le
|d(w,x)-d(w,u)|+|d(x,z)-d(u,z)|
\le
2d(x,u).
\]
Hence, if \(d(x,u)<a_u\) and \(d(x,z)\ge\rho\), then
\[
d(u,z)
\ge
d(x,z)-d(x,u)
>
\rho-a_u
\ge
\frac{3\rho}{4}
>
\frac{\rho}{2},
\]
and therefore
\[
\max_{w\in F_u}E_w(x,z)
\ge
\eta_u-2d(x,u)
>
\frac{\eta_u}{2}.
\]

By compactness of \(K\), choose \(u_1,\ldots,u_N\in K\) such that
\[
K\subset\bigcup_{i=1}^N B(u_i,a_{u_i}),
\]
and set
\[
F:=\bigcup_{i=1}^N F_{u_i},
\qquad
\eta:=\frac12\min_{1\leq i\leq N}\eta_{u_i}.
\]
Then \(\max_{w\in F}E_w(x,z)\ge\eta\) whenever \(x\in K\) and \(d(x,z)\ge\rho\).
\end{proof}

\begin{proof}[Proof of Theorem~\ref{thm:p1_compactifiable}]
Let \(X\) be bounded and Gromov-compactifiable. The singleton case is trivial, so set
\[
M:=\operatorname{diam}(X)=\sup\Set{d(x,y)}{x,y\in X}>0.
\]
Fix \(\mu\in\mathcal P_1(X)\) and \(r>0\). Since
\[
W_1(\mu,\nu)\leq M
\qquad
\text{for all }\nu\in\mathcal P_1(X),
\]
it is enough to consider \(0<r\leq M\).

By Lemma~\ref{lem1_2}, choose a compact set \(K\subset X\) such that
\[
\mu(X\setminus K)<\frac{r}{4M}.
\]
Apply Lemma~\ref{lem:uniform_defect} to \(K\) with \(\rho=r/2\), and let \(F\subset X\) and \(\eta>0\) be as in the lemma.

Let \(\nu\in\mathcal P_1(X)\) satisfy \(W_1(\mu,\nu)\geq r\), and let \(\pi\in\Pi(\mu,\nu)\) be optimal. Set
\[
A:=\Set{(x,z)\in X\times X}{d(x,z)\ge r/2},
\qquad
B:=A\cap(K\times X).
\]
Since \(d\leq M\), we obtain
\begin{align*}
r
 \le
\int_{X\times X}d(x,z)\,d\pi(x,z)\le
\frac r2\,\pi(A^c)+M\pi(A)\le
\frac r2+M\pi(A).
\end{align*}
Hence
\[
\pi(A)\ge\frac{r}{2M}.
\]
Using the first marginal of \(\pi\),
\begin{align*}
\pi(B)
\ge
\pi(A)-\pi((X\setminus K)\times X)=
\pi(A)-\mu(X\setminus K)>
\frac{r}{4M}.
\end{align*}

For every \((x,z)\in B\),
\[
\sum_{w\in F}E_w(x,z)
\ge
\max_{w\in F}E_w(x,z)
\ge
\eta.
\]
Therefore, by \eqref{eq:p1_defect_identity},
\begin{align*}
\sum_{w\in F}\Delta_w(\mu,\nu)
&=
\int_{X\times X}\sum_{w\in F}E_w(x,z)\,d\pi(x,z)\\
&\ge
\int_B\sum_{w\in F}E_w(x,z)\,d\pi(x,z)\ge
\eta\pi(B)>
\frac{\eta r}{4M}.
\end{align*}
Set \(\varepsilon:=\frac{\eta r}{4M|F|}>0\), where \(|F|\) is the cardinality of \(F\). There is therefore some \(w\in F\) such that \(\Delta_w(\mu,\nu)>\varepsilon\), and hence
\[
W_1(\delta_w,\nu)
<
W_1(\delta_w,\mu)+W_1(\mu,\nu)-\varepsilon.
\]
Thus the set \(\Set{\delta_w}{w\in F}\subset\mathcal P_1(X)\) satisfies Proposition~\ref{pro:finite_set_criterion} at \(\mu\in\mathcal P_1(X)\) and radius \(r>0\). Since \(\mu\in\mathcal P_1(X)\) and \(r>0\) were arbitrary, \(\mathcal P_1(X)\) is Gromov-compactifiable.
\end{proof}

\section{The range \texorpdfstring{\(1<p<\infty\)}{1 < p < infinity}}\label{S3}

The argument for \(1<p<\infty\) uses the uniform convexity of \(L^p\), in the form of Clarkson's inequalities.

\begin{lemma}[Clarkson's inequalities, \cite{MR2759829}]\label{lem:clarkson}
If \(2\leq p<\infty\), then for all \(f,g\in L^p\),
\[
\left\|\frac{f+g}{2}\right\|_p^p
+
\left\|\frac{f-g}{2}\right\|_p^p
\le
\frac12\big(\|f\|_p^p+\|g\|_p^p\big).
\]
If \(1<p\le2\), then for all \(f,g\in L^p\),
\[
\left\|\frac{f+g}{2}\right\|_p^q
+
\left\|\frac{f-g}{2}\right\|_p^q
\le
\left(
\frac12\|f\|_p^p+\frac12\|g\|_p^p
\right)^{1/(p-1)},
\]
where \(q=p/(p-1)\) is the conjugate exponent of \(p\).
\end{lemma}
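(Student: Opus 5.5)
Both inequalities follow the same pattern: prove a pointwise inequality for real numbers, then integrate. The case \(2\le p<\infty\) is elementary. The case \(1<p\le2\) needs an extra step to pass from the pointwise bound to \(L^p\) norms, because the left-hand side involves \(q\)-th powers of \(L^p\) norms rather than integrals. Since \(L^p\) is real here, scalars are real numbers throughout.

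\emph{Case \(p\ge2\).} For \(a,b\in\mathbb R\), put \(s=(a+b)/2\) and \(t=(a-b)/2\). Since \(p\ge2\), the \(\ell^p\) norm on \(\mathbb R^2\) is dominated by the \(\ell^2\) norm, so
\[
|s|^p+|t|^p\le (s^2+t^2)^{p/2}.
\]
By the parallelogram identity, \(s^2+t^2=(a^2+b^2)/2\). Since \(u\mapsto u^{p/2}\) is convex, Jensen's inequality gives
\[
\Big(\frac{a^2+b^2}{2}\Big)^{p/2}\le\frac{|a|^p+|b|^p}{2}.
\]
Applying this with \(a=f(\omega)\), \(b=g(\omega)\) and integrating yields the first inequality of Lemma~\ref{lem:clarkson}.

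\emph{Case \(1<p\le2\).} Here I would first prove the scalar inequality
\begin{equation*}
\Big|\frac{a+b}{2}\Big|^q+\Big|\frac{a-b}{2}\Big|^q\le\Big(\frac{|a|^p+|b|^p}{2}\Big)^{q-1},\qquad a,b\in\mathbb R,
\end{equation*}
where \(q=p/(p-1)\) and \(q-1=1/(p-1)\). By symmetry and homogeneity (both sides are homogeneous of degree \(q\)), it suffices to take \(a=1\) and \(b=x\in[0,1]\).

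Given this, set \(r:=p/q=p-1\in(0,1]\), \(F:=|(f+g)/2|^q\) and \(G:=|(f-g)/2|^q\). Then
\[
\|(f+g)/2\|_p^q=\|F\|_{L^r},\qquad \|(f-g)/2\|_p^q=\|G\|_{L^r},
\]
where \(\|H\|_{L^r}:=(\int|H|^r)^{1/r}\). For \(0<r\le1\) this quasi-norm satisfies the reverse Minkowski inequality on nonnegative functions, so
\[
\|F\|_{L^r}+\|G\|_{L^r}\le\|F+G\|_{L^r}.
\]
The scalar inequality gives \(F+G\le\big(\tfrac12|f|^p+\tfrac12|g|^p\big)^{q-1}\) pointwise. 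Since \((q-1)r=1\),
\[
\Big\|\big(\tfrac12|f|^p+\tfrac12|g|^p\big)^{q-1}\Big\|_{L^r}=\Big(\tfrac12\|f\|_p^p+\tfrac12\|g\|_p^p\Big)^{1/(p-1)},
\]
which is the second inequality of Lemma~\ref{lem:clarkson}. The reverse Minkowski inequality itself follows from concavity of \(t\mapsto t^r\), or from Minkowski's inequality applied with exponent \(1/r\ge1\) in a dual form.

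\emph{Main obstacle.} I expect the hard step to be the scalar inequality for \(1<p\le2\), i.e.
\[
\Big(\frac{1+x}{2}\Big)^q+\Big(\frac{1-x}{2}\Big)^q\le\Big(\frac{1+x^p}{2}\Big)^{q-1},\qquad 0\le x\le1.
\]
I would first try Clarkson's original method. Substitute \(x=(1-y)/(1+y)\) and expand both sides as power series in \(y\). Then compare coefficients, using binomial-coefficient estimates valid for \(1<p\le2\). If that becomes unwieldy, an alternative is to derive it by duality from the case \(p\ge2\) applied with exponent \(q\ge2\), using Riesz--Thorin interpolation on the map \((a,b)\mapsto(a+b,a-b)\) from \(\ell^p\) to \(\ell^q\). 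Since the paper only cites the lemma, citing \cite{MR2759829} for this scalar step would also be acceptable.
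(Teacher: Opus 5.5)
The paper gives no proof of this lemma and only cites \cite{MR2759829}; your argument is the standard one (a pointwise inequality integrated for \(p\ge2\); for \(1<p\le2\), the scalar inequality combined with reverse Minkowski in \(L^{p-1}\)), and every reduction step you write out is correct. The one step you leave open, the scalar inequality, follows directly from your Riesz--Thorin idea, with no need for duality or the case \(p\ge2\): the map \((a,b)\mapsto(a+b,a-b)\) has norm \(1\) from \(\ell^1_2\) to \(\ell^\infty_2\) and norm \(\sqrt2\) on \(\ell^2_2\), so interpolation (over \(\mathbb C\), then restricting to real \(a,b\)) with \(\theta=2/q\) gives norm at most \(2^{1/q}\) from \(\ell^p_2\) to \(\ell^q_2\), i.e.\ \(|a+b|^q+|a-b|^q\le 2(|a|^p+|b|^p)^{q-1}\), which is your scalar inequality after dividing by \(2^q\).
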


\begin{lemma}\label{lem:asymptotic_dirac}
Let \(X\) be a Polish metric space and let \(p\in(1,\infty)\). Suppose that
\(\mu^*\in\mathcal P_p(X)\) and \((\mu_n)\subset\mathcal P_p(X)\) satisfy
\[
\liminf_{n\to\infty}W_p(\mu^*,\mu_n)>0
\]
and
\begin{equation}\label{eq:W_p_convergence}
W_p(\delta_y,\mu_n)-W_p(\mu^*,\mu_n)
\to
W_p(\delta_y,\mu^*)
\qquad
\text{for every }y\in X.
\end{equation}
Then \(\mu^*\) is a Dirac measure.
\end{lemma}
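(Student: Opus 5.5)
We will assume \(\mu^*\) is not a Dirac measure and derive a contradiction. The idea is to read the hypotheses as asymptotic equality in Minkowski's inequality and then apply the uniform convexity of \(L^p\) from Lemma~\ref{lem:clarkson}. For each \(n\) fix an optimal coupling \(\pi_n\in\Pi(\mu^*,\mu_n)\) for \(W_p\). For \(y\in X\), consider the functions \(f_y(x,z):=d(y,x)\) and \(g(x,z):=d(x,z)\) in \(L^p(\pi_n)\). Since \(\delta_y\otimes\mu_n\) is the only coupling of \(\delta_y\) and \(\mu_n\), the marginal conditions give
\[
\|f_y\|_{L^p(\pi_n)}=W_p(\delta_y,\mu^*)=:a_y,
\qquad
\|g\|_{L^p(\pi_n)}=W_p(\mu^*,\mu_n)=:b_n,
\]
and \(W_p(\delta_y,\mu_n)=\|d(y,z)\|_{L^p(\pi_n)}\). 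Pointwise we have \(d(y,z)\le f_y+g\), so
\[
W_p(\delta_y,\mu_n)\le\|f_y+g\|_{L^p(\pi_n)}\le a_y+b_n .
\]
Hypothesis \eqref{eq:W_p_convergence} says the left side equals \(a_y+b_n-\varepsilon_n(y)\) with \(\varepsilon_n(y)\to0\). Hence \(\|f_y+g\|\ge a_y+b_n-\varepsilon_n(y)\). Since \(\mu^*\) is not Dirac, \(a_y>0\) for every \(y\). Also \(b_n\ge c>0\) for large \(n\) by the \(\liminf\) assumption.

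The key step, and the main obstacle, is to convert this into
\[
\Big\|\frac{f_y}{a_y}-\frac{g}{b_n}\Big\|_{L^p(\pi_n)}\to0 .
\]
The difficulty is that \(b_n\) need not stay bounded. Write \(u=f_y/a_y\), \(v=g/b_n\) (unit vectors) and \(\lambda_n=a_y/(a_y+b_n)\). Then
\[
\|\lambda_n u+(1-\lambda_n)v\|\ge1-\frac{\varepsilon_n(y)}{a_y+b_n}.
\]
I would first prove a weighted uniform convexity estimate from Clarkson's inequalities. The target is: if \(\|u\|=\|v\|=1\) and \(\|\lambda u+(1-\lambda)v\|\ge1-\delta\), then \(\min\{\lambda,1-\lambda\}\,\omega(\|u-v\|)\le\delta\), where \(\omega\) is the modulus of convexity of \(L^p\). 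To get this, reduce to the midpoint case by comparing \(\lambda u+(1-\lambda)v\) with the midpoint of \(u,v\) and one endpoint, using the triangle inequality. Applied here, \(\delta_n/\lambda_n=\varepsilon_n(y)/a_y\to0\); in the bounded case \(1-\lambda_n\) is also bounded below. So \(\omega(\|u-v\|)\to0\), hence \(\|u-v\|\to0\), whether or not \(b_n\) is bounded.

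Next I compare two base points \(y_1,y_2\). By the triangle inequality in \(L^p(\pi_n)\),
\[
\Big\|\frac{f_{y_1}}{a_{y_1}}-\frac{f_{y_2}}{a_{y_2}}\Big\|_{L^p(\pi_n)}\to0 .
\]
This difference depends only on the first coordinate \(x\), so its norm equals its \(L^p(\mu^*)\) norm, which does not depend on \(n\). Hence it vanishes, and
\[
\frac{d(y_1,x)}{a_{y_1}}=\frac{d(y_2,x)}{a_{y_2}}
\qquad\text{for }\mu^*\text{-a.e. }x .
\]

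Finally, since \(\mu^*\) is not Dirac, its support contains two distinct points \(y_1\ne y_2\). Every ball \(B(y_1,t)\) has positive \(\mu^*\)-measure, so it contains points \(x\) where the identity holds. On that ball the left side is at most \(t/a_{y_1}\). The right side is at least \((d(y_1,y_2)-t)/a_{y_2}\). For small \(t\) this is impossible, and this contradiction shows that \(\mu^*\) is a Dirac measure.
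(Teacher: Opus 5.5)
Your proof is correct and follows the paper's argument: you read the hypothesis as asymptotic equality in Minkowski's inequality along optimal couplings, use Clarkson's inequalities to get $\|f_{y_j}/a_{y_j}-g/b_n\|_{L^p(\pi_n)}\to0$, compare two base points in $\operatorname{supp}(\mu^*)$ through the first marginal, and reach a contradiction near $y_1$. Your weighted modulus-of-convexity estimate is the normalized form of the paper's splitting $F_j+G=m_j(U_j+V_n)+(A_j-m_j)U_j+(c_n-m_j)V_n$ with $m_j=\min\{A_j,\delta\}$ (the paper's $\delta$ is your $c$), and no case distinction on whether $b_n$ is bounded is needed, since $b_n\ge c$ gives directly $\delta_n/\min\{\lambda_n,1-\lambda_n\}=\max\{\varepsilon_n(y)/a_y,\varepsilon_n(y)/b_n\}\to0$.
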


\begin{proof}
Suppose that \(\mu^*\) is not a Dirac measure. Choose distinct
\(y_1,y_2\in\operatorname{supp}(\mu^*)\), and set
\[
A_j:=W_p(\delta_{y_j},\mu^*)>0
\quad (j=1,2),
\qquad
c_n:=W_p(\mu^*,\mu_n).
\]
After discarding finitely many terms, we may assume that \(c_n\ge\delta>0\) for every \(n\).

For each \(n\), let \(\pi_n\in\Pi(\mu^*,\mu_n)\) be optimal, and define on \(X\times X\)
\[
F_j(x,z):=d(y_j,x),
\qquad
G(x,z):=d(x,z).
\]
Then
\[
\|F_j\|_{L^p(\pi_n)}=A_j,
\qquad
\|G\|_{L^p(\pi_n)}=c_n.
\]
Moreover, the triangle inequality gives
\[
d(y_j,z)\leq F_j(x,z)+G(x,z),
\]
and therefore
\[
W_p(\delta_{y_j},\mu_n)
\leq
\|F_j+G\|_{L^p(\pi_n)}
\leq
A_j+c_n.
\]
It follows from \eqref{eq:W_p_convergence} that
\begin{equation}\label{eq:defect_vanish}
A_j+c_n-\|F_j+G\|_{L^p(\pi_n)}
\to0
\qquad (j=1,2).
\end{equation}

Set
\[
U_j:=\frac{F_j}{A_j},
\qquad
V_n:=\frac{G}{c_n},
\qquad
m_j:=\min\{A_j,\delta\}>0.
\]
Thus
\[
\|U_j\|_{L^p(\pi_n)}
=
\|V_n\|_{L^p(\pi_n)}
=
1.
\]
Since
\[
F_j+G
=
m_j(U_j+V_n)
+
(A_j-m_j)U_j
+
(c_n-m_j)V_n,
\]
the triangle inequality and \eqref{eq:defect_vanish} yield
\[
0
\le
m_j\big(2-\|U_j+V_n\|_{L^p(\pi_n)}\big)
\le
A_j+c_n-\|F_j+G\|_{L^p(\pi_n)}
\to0.
\]
Therefore
\[
\|U_j+V_n\|_{L^p(\pi_n)}\to2
\qquad (j=1,2).
\]

Let
\[
s:=
\begin{cases}
\dfrac{p}{p-1}, & 1<p<2,\\[1ex]
p, & 2\leq p<\infty.
\end{cases}
\]
Lemma \ref{lem:clarkson} yields
\[
0
\le
\left\|\frac{U_j-V_n}{2}\right\|_{L^p(\pi_n)}^s
\le
1-
\left\|\frac{U_j+V_n}{2}\right\|_{L^p(\pi_n)}^s
\to0,
\]
so
\[
\|U_j-V_n\|_{L^p(\pi_n)}\to0
\qquad (j=1,2).
\]
Consequently,
\[
\|U_1-U_2\|_{L^p(\pi_n)}
\le
\|U_1-V_n\|_{L^p(\pi_n)}
+
\|V_n-U_2\|_{L^p(\pi_n)}
\to0.
\]
Since the first marginal of \(\pi_n\) is \(\mu^*\),
\[
\int_X
\left|
\frac{d(y_1,x)}{A_1}
-
\frac{d(y_2,x)}{A_2}
\right|^p
\,d\mu^*(x)
=
\|U_1-U_2\|_{L^p(\pi_n)}^p
\to0.
\]
The integral on the left does not depend on \(n\), so it is zero. Its integrand is continuous, and every neighborhood of a point in \(\operatorname{supp}(\mu^*)\) has positive \(\mu^*\)-measure. Hence
\[
\frac{d(y_1,x)}{A_1}
=
\frac{d(y_2,x)}{A_2}
\qquad
\text{for every }x\in\operatorname{supp}(\mu^*).
\]
Taking \(x=y_1\) gives \(0=\frac{d(y_2,y_1)}{A_2}\), contrary to \(y_1\ne y_2\). Thus \(\mu^*\) is a Dirac measure.
\end{proof}

\begin{proof}[Proof of Theorem~\ref{thm:compactifiable_P_p}]
The assertion is immediate if \(X\) is a singleton. Assume otherwise.

Let \((\mu_n)\subset\mathcal P_p(X)\) and \(\mu^*\in\mathcal P_p(X)\) satisfy
\begin{equation}\label{eq:horo_seq}
W_p(\nu,\mu_n)-W_p(\mu^*,\mu_n)
\to
W_p(\nu,\mu^*)
\qquad
\text{for every }\nu\in\mathcal P_p(X).
\end{equation}
Set \(c_n:=W_p(\mu^*,\mu_n)\). By Proposition~\ref{pro:topology_wasserstein}(i),
\(\mathcal P_p(X)\) is Polish and hence separable. In view of
Proposition~\ref{pro:net_convergence}(ii), it remains to prove that \(c_n\to0\).

Suppose otherwise. Passing to a subsequence and relabeling, there is
\(\delta>0\) such that \(c_n\ge\delta\) for every \(n\). Taking \(\nu=\delta_y\) in \eqref{eq:horo_seq} and applying
Lemma~\ref{lem:asymptotic_dirac}, we obtain \(\mu^*=\delta_{x^*}\) for some \(x^*\in X\).

Choose \(y\ne x^*\), and set
\[
D:=d(x^*,y)>0,
\qquad
\nu:=\frac12\delta_{x^*}+\frac12\delta_y.
\]
The triangle inequality gives
\[
W_p(\delta_y,\mu_n)
\le
W_p(\delta_y,\delta_{x^*})
+
W_p(\delta_{x^*},\mu_n)
=
D+c_n.
\]
Using the coupling
\[
\frac12\,\delta_{x^*}\otimes\mu_n
+
\frac12\,\delta_y\otimes\mu_n
\in\Pi(\nu,\mu_n),
\]
we obtain
\begin{equation}\label{eq:Wp_nu_mun}
W_p^p(\nu,\mu_n)
\le
\frac12c_n^p
+
\frac12W_p^p(\delta_y,\mu_n)
\le
\frac12c_n^p+\frac12(c_n+D)^p.
\end{equation}
Define the function
\[
f(t):=
\left(
\frac{t^p+(t+D)^p}{2}
\right)^{1/p}
-t,
\qquad t\ge0.
\]
It follows from \eqref{eq:Wp_nu_mun} that
\[
W_p(\nu,\mu_n)-c_n\leq f(c_n).
\]
We claim that \(f\) is strictly decreasing. Indeed,
\[
f'(t)
=
\frac{\big(t^{p-1}+(t+D)^{p-1}\big)/2}{
\left(
\dfrac{t^p+(t+D)^p}{2}
\right)^{(p-1)/p}
}
-1.
\]
Since \(D>0\), Hölder's inequality gives
\[
\frac{t^{p-1}+(t+D)^{p-1}}{2}
<
\left(
\frac{t^p+(t+D)^p}{2}
\right)^{(p-1)/p},
\]
so \(f'(t)<0\) for all \(t\geq0\). Since \(c_n\geq\delta\),
\[
W_p(\nu,\mu_n)-c_n
\leq f(c_n)
\leq f(\delta)
<f(0)
=2^{-1/p}D
=W_p(\nu,\delta_{x^*}).
\] 
Letting \(n\to\infty\) contradicts \eqref{eq:horo_seq}. Thus \(c_n\to0\). Proposition~\ref{pro:net_convergence}(ii) now implies that
\(\mathcal P_p(X)\) is Gromov-compactifiable.
\end{proof}

\section{The case \texorpdfstring{\(p=\infty\)}{p = infinity}}\label{S4}

Unless \(X\) is a singleton, Proposition~\ref{pro:topology_wasserstein}(ii) shows that \((\mathcal P_\infty(X),W_\infty)\) is nonseparable. We therefore work with the finite-set criterion. In the sufficiency argument, each test point of \(X\) gives a measure obtained by moving a small amount of mass to that point. Lemma~\ref{lem:radial_mass_bound} provides the gap in transport radius needed to use these measures as witnesses.

\subsection{Necessity}

\begin{proof}[Proof of necessity in Theorem~\ref{thm3}]
Fix \(x\in X\) and \(r>0\). If \(d(x,z)<r\) for every \(z\in X\), then the finite-set criterion for \(X\) at \(x\) and \(r\) is vacuous, and there is nothing to prove. Thus choose \(z_0\in X\) with \(d(x,z_0)\geq r\).

By Proposition~\ref{pro:finite_set_criterion}, there are
\(\sigma_1,\ldots,\sigma_k\in\mathcal P_\infty(X)\) and \(\eta>0\) such that, whenever \(W_\infty(\nu,\delta_x)\geq r\), some \(i\) satisfies
\[
W_\infty(\sigma_i,\nu)
\le
W_\infty(\sigma_i,\delta_x)
+
W_\infty(\delta_x,\nu)
-\eta.
\]
Taking \(\nu=\delta_{z_0}\) shows that \(k\ge1\).

For each \(i\),
\[
W_\infty(\sigma_i,\delta_x)
=
\sup_{y\in\operatorname{supp}(\sigma_i)}d(y,x).
\]
Choose \(w_i\in\operatorname{supp}(\sigma_i)\) such that
\[
d(w_i,x)
\ge
W_\infty(\sigma_i,\delta_x)-\frac{\eta}{2}.
\]
If \(d(z,x)\geq r\), then for some \(i\),
\begin{align*}
d(w_i,z)
&\le
W_\infty(\sigma_i,\delta_z)\\
&\le
W_\infty(\sigma_i,\delta_x)+d(x,z)-\eta\\
&\le
d(w_i,x)+d(x,z)-\frac{\eta}{2}.
\end{align*}
Proposition~\ref{pro:finite_set_criterion}, with \(F:=\{w_1,\ldots,w_k\}\) and \(\eta/2\), gives the result.
\end{proof}

\subsection{Sufficiency}

For \(w\) away from \(x\), we replace the mass near \(w\), together with a small portion of the mass near \(x\), by an atom at \(w\). We shall use the following estimate. As usual, \(B(x,r)\) and \(\overline B(x,r)\) denote the open and closed balls centered at \(x\).

\begin{lemma}\label{lem:radial_mass_bound}
Let \(\mu\in\mathcal P_\infty(X)\), let \(x,w\in X\), and let \(\rho>0\). Set
\[
A:=B(x,\rho),
\qquad
R:=d(x,w),
\]
and assume that \(R>\rho\) and \(\mu(A)>0\). For \(q\in(0,\mu(A))\), set
\[
C:=\overline B(w,R-\rho),\qquad 
\sigma
:=
\mu-\mu|_C-\frac{q}{\mu(A)}\mu|_A
+\big(\mu(C)+q\big)\delta_w.
\]
Then \(\sigma\in\mathcal P_\infty(X)\), and
\[
R-\rho<W_\infty(\sigma,\mu)\leq R+\rho.
\]
\end{lemma}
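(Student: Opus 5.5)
First I check that \(\sigma\) is a probability measure with bounded support. The key geometric fact is that \(A\) and \(C\) are disjoint: if \(y\in A\cap C\), then
\[
R=d(x,w)\le d(x,y)+d(y,w)<\rho+(R-\rho)=R,
\]
which is absurd. Hence \(\mu-\mu|_C-\frac{q}{\mu(A)}\mu|_A=\mu|_{X\setminus(A\cup C)}+(1-\frac{q}{\mu(A)})\mu|_A\) is a nonnegative measure. Its total mass is \(1-\mu(C)-q\), so adding \((\mu(C)+q)\delta_w\) gives total mass one. The support of \(\sigma\) lies in \(\operatorname{supp}(\mu)\cup\{w\}\), which is bounded, so \(\sigma\in\mathcal P_\infty(X)\).

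For the upper bound I write down an explicit coupling. Let \(\mu':=\mu|_{X\setminus(A\cup C)}+(1-\frac{q}{\mu(A)})\mu|_A\) be the common part of \(\sigma\) and \(\mu\), and set
\[
\pi:=(\mathrm{id},\mathrm{id})_\#\mu'+\delta_w\otimes\Big(\mu|_C+\tfrac{q}{\mu(A)}\mu|_A\Big).
\]
The identities for marginals of product measures recorded in Section~\ref{section2.1} show that \(\pi\in\Pi(\sigma,\mu)\). The diagonal part contributes distance \(0\). On the product part, the pairs are \((w,y)\) with either \(y\in C\), so \(d(w,y)\le R-\rho\), or \(y\in\operatorname{supp}(\mu|_A)\subset\overline B(x,\rho)\), so \(d(w,y)\le R+\rho\). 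The support of \(\pi\) is contained in the closure of these sets, and \(d\) is continuous, so \(\|\pi\|_\infty\le R+\rho\). This gives \(W_\infty(\sigma,\mu)\le R+\rho\).

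The lower bound is the main step. Let \(\pi\in\Pi(\sigma,\mu)\) be any coupling; I must show \(\|\pi\|_\infty>R-\rho\). Suppose instead that \(\|\pi\|_\infty\le R-\rho\). Then \(\pi\) is concentrated on \(\{d\le R-\rho\}\), so
\[
\pi(\{w\}\times X)=\pi(\{w\}\times\overline B(w,R-\rho))\le\pi(X\times C)=\mu(C).
\]
But \(\pi(\{w\}\times X)=\sigma(\{w\})\ge\mu(C)+q>\mu(C)\), since \(q>0\). This is a contradiction. Hence every coupling satisfies \(\|\pi\|_\infty>R-\rho\). Because the infimum defining \(W_\infty\) is attained, as shown in Section~\ref{section2.1}, the strict inequality passes to \(W_\infty(\sigma,\mu)>R-\rho\).

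The delicate points are the following two, and I will state each explicitly. The first is attainment of the infimum, which is what turns the strict inequality for every coupling into a strict inequality for \(W_\infty\). The second is the computation \(\sigma(\{w\})\ge\mu(C)+q\). Here \(w\in C\), so \(\mu(\{w\})\) is removed by \(-\mu|_C\) and then restored inside the atom \((\mu(C)+q)\delta_w\). Also \(w\notin A\), because \(R>\rho\). So the atom at \(w\) has mass exactly \(\mu(C)+q\) plus \(\mu'(\{w\})=0\).
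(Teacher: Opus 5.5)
Your proof is correct and follows the paper's argument. It uses disjointness of $A$ and $C$, then the coupling that moves $\mu|_C+\frac{q}{\mu(A)}\mu|_A$ to $w$ for the upper bound, and finally the observation that the atom $\sigma(\{w\})=\mu(C)+q$ cannot be supplied from $C$ alone when the transport radius is at most $R-\rho$. The only cosmetic difference is that you prove $\|\pi\|_\infty>R-\rho$ for every coupling and then invoke attainment, whereas the paper applies the same counting directly to an optimal coupling.
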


\begin{proof}
For \(y\in A\),
\[
d(w,y)\geq R-d(x,y)>R-\rho,
\]
so \(A\cap C=\varnothing\). Hence
\[
\sigma
=
\mu|_{X\setminus(A\cup C)}
+
\left(1-\frac{q}{\mu(A)}\right)\mu|_A
+
\big(\mu(C)+q\big)\delta_w,
\]
and therefore \(\sigma\in\mathcal P_\infty(X)\).

Moving \(\mu|_C+\frac{q}{\mu(A)}\mu|_A\) to \(w\) and leaving the remaining mass fixed gives
\[
W_\infty(\sigma,\mu)\leq R+\rho.
\]

Let \(\gamma\in\Pi(\mu,\sigma)\) be optimal. If \(W_\infty(\sigma,\mu)\leq R-\rho\), then all mass transported to \(w\) must originate in \(C\). Since \(w\in C\),
\[
\mu(C)+q
=
\sigma(\{w\})
=
\gamma(X\times\{w\})
=
\gamma(C\times\{w\})
\le
\gamma(C\times X)
=
\mu(C),
\]
a contradiction. Hence
\[
W_\infty(\sigma,\mu)>R-\rho.
\] 
\end{proof}

\begin{proof}[Proof of sufficiency in Theorem~\ref{thm3}]
Fix \(\mu\in\mathcal P_\infty(X)\) and \(r>0\), and choose
\(x\in\operatorname{supp}(\mu)\). If \(d(x,z)<\frac r3\) for every \(z\in X\),  then
\[
W_\infty(\lambda,\lambda')
\le
\operatorname{diam}(X)
\le
\frac{2r}{3}
<
r
\qquad\text{for all }\lambda,\lambda'\in\mathcal P_\infty(X),
\]
so there is nothing to prove. We may assume that \(d(x,z)\geq \frac{r}{3}\) for some \(z\in X\). 

By Proposition~\ref{pro:finite_set_criterion}, there are
\(w_1,\ldots,w_m\in X\) and \(\varepsilon>0\) such that, for every
\(z\in X\) with \(d(x,z)\geq r/3\), there is some \(k\in\{1,\ldots,m\}\) for which
\begin{equation}\label{eq:pinf_point_witness}
d(w_k,z)
\leq
d(w_k,x)+d(x,z)-\varepsilon.
\end{equation}
A point \(w_k=x\) never satisfies \eqref{eq:pinf_point_witness}, so such points may be omitted. By our assumption above, the remaining family is nonempty. Set
\[
R_k:=d(x,w_k)>0,
\qquad
1\leq k\leq m.
\]

Choose
\[
0<\rho<
\frac14
\min\left\{\frac r3,\varepsilon,R_1,\ldots,R_m\right\},
\qquad
A:=B(x,\rho).
\]
For \(1\leq k\leq m\), let
\[
C_k:=\overline B(w_k,R_k-\rho)
\]
and
\[
\sigma_k
:=
\mu-\mu|_{C_k}
-\frac1{m+1}\mu|_A
+\Big(\mu(C_k)+\frac{\mu(A)}{m+1}\Big)\delta_{w_k}.
\]
By Lemma~\ref{lem:radial_mass_bound}, \(\sigma_k\in\mathcal P_\infty(X)\), and
\begin{equation}\label{eq:pinf_radial_bounds}
R_k-\rho<W_\infty(\sigma_k,\mu)\leq R_k+\rho.
\end{equation}

Since the family is finite, we may choose \(\eta>0\) such that
\[
0<\eta<
\min_{1\leq k\leq m}
\{W_\infty(\sigma_k,\mu) -R_k+\rho,\ W_\infty(\sigma_k,\mu) -2\rho\}.
\]
Indeed, \eqref{eq:pinf_radial_bounds} gives
\[
W_\infty(\sigma_k,\mu) -R_k+\rho>0,
\qquad
W_\infty(\sigma_k,\mu) -2\rho
>
R_k-3\rho
>
0.
\]

Let \(\nu\in\mathcal P_\infty(X)\) satisfy \(W_\infty(\mu,\nu)\geq r\), and let \(\pi\in\Pi(\mu,\nu)\) be optimal, i.e., \(\|\pi\|_\infty=W_\infty(\mu,\nu)\).
Set
\[
H_0
:=
\Set{(y,z)\in A\times X}
{d(y,z)\leq W_\infty(\mu,\nu)-\frac r3}
\]
and, for \(1\leq k\leq m\),
\[
H_k
:=
\Set{(y,z)\in A\times X}
{d(w_k,z)\leq R_k+d(x,z)-\varepsilon}.
\]
If \((y,z)\in(A\times X)\setminus H_0\), then
\[
d(x,z)
\ge
d(y,z)-d(x,y)
>
W_\infty(\mu,\nu) -\frac r3-\rho
\ge
\frac{2r}{3}-\rho
>
\frac r3.
\]
Hence \eqref{eq:pinf_point_witness} applies and
\[
A\times X=\bigcup_{j=0}^mH_j.
\]
Therefore
\[
\mu(A)
=
\pi(A\times X)
\le
\sum_{j=0}^m\pi(H_j),
\]
so for some \(j\in\{0,\ldots,m\}\), \(\pi(H_j)\geq \frac{\mu(A)}{m+1}\). Set
\[
\theta:=\frac{\mu(A)}{(m+1)\pi(H_j)}\,\pi|_{H_j}.
\]
Then
\[
0\le\theta\le\pi|_{A\times X},
\qquad
\theta(X\times X)=\frac{\mu(A)}{m+1}.
\]
If \(j=0\), take \(k=1\); if \(j>0\), take \(k=j\).

It follows from the triangle inequality that \(A\cap C_k=\varnothing\). Write
\[
\pi_A:=\pi|_{A\times X},
\qquad
\pi_C:=\pi|_{C_k\times X},
\qquad
\pi_O:=\pi|_{(X\setminus(A\cup C_k))\times X}.
\]
Then \(\pi=\pi_A+\pi_C+\pi_O\). Define
\[
\xi:=(\rho_2)_\#(\pi_C+\theta),
\qquad
\tau:=(\rho_2)_\#(\pi_A-\theta).
\]
Since
\[
\pi_C(X\times X)=\mu(C_k),
\qquad
\pi_A(X\times X)=\mu(A),
\]
we have
\[
\xi(X)=\mu(C_k)+\frac{\mu(A)}{m+1},
\qquad
\tau(X)=\mu(A)-\frac{\mu(A)}{m+1}=\frac{m}{m+1}\mu(A),
\]
and
\[
\xi+\tau+(\rho_2)_\#\pi_O
=
(\rho_2)_\#\pi
=
\nu.
\]

Set
\[
\Gamma
:=
\delta_{w_k}\otimes\xi
+
\frac1{\mu(A)}\,\mu|_A\otimes\tau
+
\pi_O.
\]
Using the elementary marginal identities in Subsection \ref{section2.1},
\[
(\rho_1)_\#\Gamma
=
\Big(\mu(C_k)+\frac{\mu(A)}{m+1}\Big)\delta_{w_k}
+
\frac{m}{m+1}\mu|_A
+
\mu|_{X\setminus(A\cup C_k)}
=
\sigma_k,
\]
while
\[
(\rho_2)_\#\Gamma
=
\xi+\tau+(\rho_2)_\#\pi_O
=
\nu.
\]
Thus \(\Gamma\in\Pi(\sigma_k,\nu)\).

It remains to estimate its transport radius. For
\(\pi_C\)-almost every \((y,z)\),
\begin{align*}
d(w_k,z)
&\le
d(w_k,y)+d(y,z)\\ &
\le
R_k-\rho+W_\infty(\mu,\nu)\\
&<
W_\infty(\sigma_k,\mu) +W_\infty(\mu,\nu)-\eta.
\end{align*}
For \(\theta\)-almost every \((y,z)\), suppose first that \(j=0\). Then
\[
\begin{aligned}
d(w_k,z)
&\le
R_k+d(x,y)+d(y,z)\\
&<
R_k+\rho+W_\infty(\mu,\nu)-\frac r3\\
&=
R_k-\rho+W_\infty(\mu,\nu)-\left(\frac r3-2\rho\right)\\
&<
W_\infty(\sigma_k,\mu) +W_\infty(\mu,\nu) -\eta.
\end{aligned}
\]
If \(j>0\), then \(k=j\), and
\[
\begin{aligned}
d(w_k,z)
&\le
R_k+d(x,z)-\varepsilon\\
&\le
R_k+d(x,y)+d(y,z)-\varepsilon\\
&<
R_k+\rho+W_\infty(\mu,\nu) -\varepsilon\\
&=
R_k-\rho+W_\infty(\mu,\nu) -(\varepsilon-2\rho)\\
&<
W_\infty(\sigma_k,\mu) +W_\infty(\mu,\nu) -\eta.
\end{aligned}
\]
Hence
\begin{equation}\label{eq:comp_1}
\|\delta_{w_k}\otimes\xi\|_\infty
\le
W_\infty(\sigma_k,\mu) +W_\infty(\mu,\nu)-\eta.
\end{equation}

Since \(0\le\pi_A-\theta\le\pi_A\) and \(\|\pi\|_\infty=W_\infty(\mu,\nu)\), the measure \(\tau\) is concentrated on
\(\overline B(x,W_\infty(\mu,\nu)+\rho)\). Thus
\begin{equation}\label{eq:comp_2}
\left\|
\frac1{\mu(A)}\,\mu|_A\otimes\tau
\right\|_\infty
\le
W_\infty(\mu,\nu) +2\rho
<
W_\infty(\sigma_k,\mu) +W_\infty(\mu,\nu)-\eta.
\end{equation}
Finally,
\begin{equation}\label{eq:comp_3}
\|\pi_O\|_\infty
\le
W_\infty(\mu,\nu)
<
W_\infty(\sigma_k,\mu) +W_\infty(\mu,\nu)-\eta.
\end{equation}
Combining \eqref{eq:comp_1}--\eqref{eq:comp_3}, we conclude 
\[
W_\infty(\sigma_k,\nu)
 \le
\|\Gamma\|_\infty 
 \le
W_\infty(\sigma_k,\mu) +W_\infty(\mu,\nu) -\eta .
\]
Thus the finite family \(\{\sigma_1,\ldots,\sigma_m\}\) satisfies Proposition~\ref{pro:finite_set_criterion} at \(\mu\).
\end{proof}

\section{Gromov-compactifiability of
\texorpdfstring{\(L^\infty\)}{L-infinity} spaces}\label{S5}

For a measure space \((\Omega,\mathcal B,\mu)\), we write
\(L^\infty(\Omega)\) for \(L^\infty(\Omega,\mathcal B,\mu)\) and use
real scalars throughout this section. For \(E\in\mathcal B\), let
\(\chi_E\) denote its indicator function,
\[
\chi_E(t):=
\begin{cases}
1, & t\in E,\\
0, & t\notin E.
\end{cases}
\]
Recall that \(A\in\mathcal B\) is an atom if \(\mu(A)>0\) and every
measurable \(E\subset A\) satisfies
\[
\mu(E)=0
\qquad\text{or}\qquad
\mu(A\setminus E)=0.
\]
The measure \(\mu\) is called atomless if it has no atoms. We use the
convention
\[
\operatorname{sgn}(a):=
\begin{cases}
1, & a\ge0,\\
-1, & a<0,
\end{cases}
\]
and apply \(\operatorname{sgn}\) pointwise to measurable functions.

\begin{theorem}\label{thm4}
Let \((\Omega,\mathcal B,\mu)\) be a measure space. Then
\(L^\infty(\Omega)\) is Gromov-compactifiable if and only if
\[
\mu(\Omega)=0
\qquad\text{or}\qquad
\mu\text{ has an atom}.
\]
\end{theorem}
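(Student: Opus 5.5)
The plan is to use the finite-set criterion, Proposition~\ref{pro:finite_set_criterion}, in both directions. This avoids any separability or sequential argument, which is natural since \(L^\infty(\Omega)\) is typically nonseparable. If \(\mu(\Omega)=0\), then \(L^\infty(\Omega)=\{0\}\) is a single point and there is nothing to prove. So assume \(\mu(\Omega)>0\).

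\emph{Sufficiency (an atom \(A\) exists).} Every \(f\in L^\infty(\Omega)\) is a.e.\ constant on \(A\); write \(f(A)\) for this constant. Fix \(x\in L^\infty(\Omega)\) and \(r>0\). I propose the witnesses
\[
F:=\{x+r\chi_A,\ x-r\chi_A\}
\]
and \(\eta:=r\). Let \(z\) satisfy \(t:=\|z-x\|_\infty\ge r\), and put \(g:=z-x\). If \(g(A)\ge0\), take \(w=x+r\chi_A\), so that
\[
\|w-z\|_\infty=\max\{|r-g(A)|,\ \|g\chi_{\Omega\setminus A}\|_\infty\}.
\]
Since \(0\le g(A)\le t\), we have \(|r-g(A)|\le\max\{r,t\}\), and therefore
\[
\|w-z\|_\infty\le\max\{r,t\}=t=r+t-r=\|w-x\|_\infty+\|x-z\|_\infty-\eta .
\]
The case \(g(A)<0\) is symmetric with \(w=x-r\chi_A\). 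Thus the criterion holds, and \(L^\infty(\Omega)\) is Gromov-compactifiable.

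\emph{Necessity (\(\mu\) atomless, \(\mu(\Omega)>0\)).} I will show that the criterion fails at \(x=0\) and \(r=1\). Suppose \(w_1,\dots,w_m\) and \(\eta>0\) are given; we may discard any \(w_i=0\). Set
\[
S_i:=\{|w_i|>\|w_i\|_\infty-\eta/2\},
\]
a set of positive measure. The key step is to find pairwise disjoint measurable sets \(E_i\subset S_i\) with \(\mu(E_i)>0\). Given these, define
\[
z:=-\sum_i\operatorname{sgn}(w_i)\chi_{E_i}.
\]
Then \(\|z\|_\infty=1\). On \(E_i\) we have \(|w_i-z|=|w_i|+1>\|w_i\|_\infty+1-\eta/2\), so
\[
\|w_i-z\|_\infty>\|w_i\|_\infty+\|z\|_\infty-\eta
\]
for every \(i\). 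No finite family works, so \(L^\infty(\Omega)\) is not Gromov-compactifiable. This is the octahedral mechanism.

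The main obstacle is constructing the disjoint sets \(E_i\) for an arbitrary, possibly non-\(\sigma\)-finite or non-semifinite, measure. I would first form the finite partition generated by \(S_1,\dots,S_m\). Each \(S_i\) is a union of cells, so each \(S_i\) contains a cell \(C_i\) of positive measure, though several indices may share the same cell. Next I use the definition of atom directly: a non-null set that is not an atom contains a measurable \(E\) with \(\mu(E)>0\) and \(\mu(C\setminus E)>0\). Iterating this splits any non-null cell into as many disjoint non-null pieces as required, and this needs no finiteness of measure. Assigning distinct pieces of each shared cell to the indices that chose it yields the \(E_i\). The point to check carefully is that ``non-null'' here means \(\mu>0\), possibly \(\mu=\infty\), consistently with the definition of \(L^\infty\) norm through \(\mu\)-null sets.
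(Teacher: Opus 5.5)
Your proposal is correct and follows the paper's proof essentially step for step: witnesses $\pm r\chi_A$ with $\eta=r$ in the atom case, and the test function $z=-\sum_i\operatorname{sgn}(w_i)\chi_{E_i}$ at the origin with $r=1$ in the atomless case, where your cell-partition splitting replaces the paper's inductive Lemma~\ref{lem:atoms}(ii) and your $\eta/2$ makes the strict inequality immediate. The one loose end is cosmetic: if every $w_i$ is discarded, your formula gives $z=0$, so in that case take any $z$ with $\|z\|_\infty=1$ instead.
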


We shall use the following elementary facts.

\begin{lemma}\label{lem:atoms}
Let \((\Omega,\mathcal B,\mu)\) be a measure space.
\begin{enumerate}[\rm (i)]
\item If \(A\) is an atom and \(f\in L^\infty(\Omega)\), then there is
\(c_f\in\mathbb R\) such that
\[
f=c_f\quad\text{a.e. on }A,
\qquad
|c_f|\le\|f\|_\infty.
\]

\item If \(\mu\) is atomless and \(A_1,\ldots,A_k\in\mathcal B\) satisfy
\[
\mu(A_i)>0
\qquad(1\leq i\leq k),
\]
then there are pairwise disjoint measurable sets \(B_1,\ldots,B_k\)
such that
\[
B_i\subset A_i,
\qquad
\mu(B_i)>0
\qquad(1\leq i\leq k).
\]
\end{enumerate}
\end{lemma}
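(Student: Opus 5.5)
Both parts of Lemma~\ref{lem:atoms} are standard measure-theoretic facts, and we would prove them directly from the definition of an atom.

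For (i), fix an atom \(A\) and \(f\in L^\infty(\Omega)\), and choose a representative with \(|f|\le\|f\|_\infty\) everywhere. For each rational \(s\), the set \(E_s:=A\cap\{f\le s\}\) is measurable. By the atom property, either \(\mu(E_s)=0\) or \(\mu(A\setminus E_s)=0\). Let
\[
c_f:=\inf\Set{s\in\mathbb Q}{\mu(A\setminus E_s)=0}.
\]
If \(s>\|f\|_\infty\), then \(\mu(A\setminus E_s)=0\). If \(s<-\|f\|_\infty\), then \(E_s=\varnothing\), and since \(\mu(A)>0\) we get \(\mu(A\setminus E_s)>0\). Hence \(c_f\) is finite with \(|c_f|\le\|f\|_\infty\).

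Next we show that \(f=c_f\) a.e.\ on \(A\).
\begin{itemize}
\item For rationals \(s>c_f\), we have \(\mu(A\cap\{f>s\})=0\). Taking a countable union gives \(f\le c_f\) a.e.\ on \(A\).
\item For rationals \(s<c_f\), we have \(\mu(A\setminus E_s)>0\), so \(\mu(E_s)=0\), that is, \(\mu(A\cap\{f\le s\})=0\). Taking a countable union gives \(f\ge c_f\) a.e.\ on \(A\).
\end{itemize}
The only delicate point is that the measure need not be \(\sigma\)-finite. This causes no difficulty, because everything takes place inside \(A\), and the argument only uses countable unions of null sets. It never uses finiteness of \(\mu(A)\).

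For (ii), the key sub-claim is that, in an atomless measure space, every set of positive measure contains sets of arbitrarily small positive measure. First shrink each \(A_i\) to a subset of positive finite measure. This step requires care without \(\sigma\)-finiteness. If \(\mu(A_i)=\infty\) has no subset of finite positive measure, then \(A_i\) need not be an atom: a set all of whose subsets have measure \(0\) or \(\infty\) is still non-atomic if it splits into two infinite pieces. To avoid this, we would argue directly by induction on \(k\) without relying on finiteness.

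For the inductive step, suppose disjoint \(B_1,\ldots,B_{k-1}\) have been chosen with \(\mu(B_i)>0\), and that they were also arranged so that each \(A_j\setminus(B_1\cup\dots\cup B_{k-1})\) still has positive measure for every \(j\ge k\). To choose \(B_k\inside A_k\) while preserving this property, we use the fact that a non-atom \(A_k\) splits as \(E\sqcup(A_k\setminus E)\) with both parts of positive measure.

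The main obstacle is to guarantee that, after removing \(B_k\), each later \(A_j\) keeps positive measure. We would handle this by first refining each \(A_j\) into pieces of positive measure and then taking \(B_k\) to be a single piece. Concretely, fix \(j\ge k\). If \(\mu(A_j\setminus A_k)>0\), any choice of \(B_k\subset A_k\) is safe for \(j\). Otherwise \(A_j\cap A_k\) carries the positive measure of \(A_j\), and we split \(A_j\cap A_k\) into two positive-measure parts. Repeating this splitting finitely often, over all \(j>k\), produces a partition of \(A_k\) into positive-measure cells such that every \(A_j\) meets at least two cells in positive measure. Then we take \(B_k\) to be one cell, and every later \(A_j\) keeps positive measure outside \(B_k\).
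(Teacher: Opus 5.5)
Your part (i) is the paper's argument in different notation. Your rational-threshold infimum equals the paper's \(b=\operatorname*{ess\,sup}_A f\), and your second bullet uses the atom property exactly where the paper does, to rule out \(\operatorname*{ess\,inf}_A f<b\).

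Part (ii) is correct but takes a different route. The paper revises earlier choices instead of protecting later sets. If \(A_k\setminus\bigcup_{i<k}B_i\) has positive measure, it becomes \(B_k\). Otherwise \(\mu(A_k\cap B_j)>0\) for some \(j<k\), and a single split of \(A_k\cap B_j\) gives \(C\) with \(\mu(C)>0\) and \(\mu((A_k\cap B_j)\setminus C)>0\); one then sets \(B_k:=C\) and replaces \(B_j\) by \(B_j\setminus C\). That costs one split per step and needs no lookahead. Your forward invariant makes each \(B_i\) final once chosen. The price is possibly one split for every later index at each step, plus a partition-refinement argument.

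To make your version airtight, add three points:
\begin{itemize}
\item State that in the inductive step every set is the reduced set \(A_j':=A_j\setminus\bigcup_{i<k}B_i\), and that \(B_k\subset A_k'\). With the original sets, your claim that any \(B_k\subset A_k\) is safe when \(\mu(A_j\setminus A_k)>0\) fails, because \(A_j\setminus A_k\) may lie inside \(B_1\cup\dots\cup B_{k-1}\).
\item Note that refining a partition preserves the property ``meets at least two cells in positive measure'', by subadditivity.
\item When \(A_j'\) meets only one cell \(P\) in positive measure, split \(P\) along some \(E\subset A_j'\cap P\) with \(\mu(E)>0\) and \(\mu((A_j'\cap P)\setminus E)>0\).
\end{itemize}

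Finally, drop the announced ``key sub-claim''. It is false for atomless measures that are not semifinite: on \([0,1]\), let \(\mu(E)=\infty\) if \(E\) has positive Lebesgue measure and \(\mu(E)=0\) otherwise. As you essentially observe, neither your argument nor the paper's uses it.
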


\begin{proof}
For \textup{(i)}, set
\[
a:=\operatorname*{ess\,inf}_{A}f,
\qquad
b:=\operatorname*{ess\,sup}_{A}f.
\]
If \(a<b\), choose \(c\in(a,b)\). Then
\[
\mu\big(A\cap\{f<c\}\big)>0,
\qquad
\mu\big(A\cap\{f>c\}\big)>0,
\]
which is impossible since \(A\) is an atom. Hence \(a=b=:c_f\), and
\textup{(i)} follows.

For \textup{(ii)}, argue by induction on \(k\). Suppose that
\(B_1,\ldots,B_{k-1}\) have already been chosen. If
\[
\mu\bigg(A_k\setminus\bigcup_{i<k}B_i\bigg)>0,
\]
take
\[
B_k:=A_k\setminus\bigcup_{i<k}B_i.
\]
Otherwise, there exists some \(j<k\) such that \(\mu(A_k\cap B_j)>0\). Since \(\mu\) is atomless, there is a measurable
\(C\subset A_k\cap B_j\) such that
\[
\mu(C)>0,
\qquad
\mu\big((A_k\cap B_j)\setminus C\big)>0.
\]
Replace \(B_j\) by \(B_j\setminus C\) and set \(B_k:=C\). The resulting
sets still have positive measure and are pairwise disjoint.
\end{proof}

\begin{remark}
For localizable measure spaces with \(\mu(\Omega)>0\), it is known that
\(L^\infty(\Omega)\) is octahedral if and only if \(\mu\) is atomless;
see \cite[Corollary~6.7]{CiaciLangemetsLissitsin2022}. The localizability
hypothesis is not needed for this equivalence. If \(\mu\) is atomless, then \(L^\infty(\Omega)\) has the Daugavet
property \cite{Werner2001}, and hence is octahedral
\cite[Corollary~2.5]{BecerraGuerreroLopezPerezRuedaZoca2014}.

Conversely, suppose that \(A\) is an atom and set \(x:=\chi_A\). If
\(\|z\|_\infty=1\), Lemma~\ref{lem:atoms}(i) gives
\[
z=c_z\quad\text{a.e. on }A
\]
for some \(|c_z|\le1\). Therefore
\[
\begin{aligned}
\min\{\|z-x\|_\infty,\|z+x\|_\infty\}
&\le
\|z-\operatorname{sgn}(c_z)x\|_\infty\\
&=
\max\left\{
1-|c_z|,
\,
\|z\chi_{\Omega\setminus A}\|_\infty
\right\}
\le1.
\end{aligned}
\]
Thus \(L^\infty(\Omega)\) is not octahedral by
\cite[Proposition~3.3]{MR4952998}. Hence, whenever \(\mu(\Omega)>0\),
\[
L^\infty(\Omega)\text{ is octahedral}
\quad\text{if and only if}\quad
\mu\text{ is atomless}.
\]
Together with \cite[Theorem~3.4]{MR4952998}, this also yields
Theorem~\ref{thm4}. We give below a proof based only on
Proposition~\ref{pro:finite_set_criterion}.
\end{remark}

\begin{proof}[Proof of Theorem~\ref{thm4}]
If \(\mu(\Omega)=0\), then \(L^\infty(\Omega)=\{0\}\). Assume
\(\mu(\Omega)>0\). Since the norm metric in \(L^\infty(\Omega)\) and the condition in
Proposition~\ref{pro:finite_set_criterion} are invariant under
translations, it is enough to work at the origin.

Suppose first that \(A\) is an atom. Fix \(r>0\) and set
\[
F_r:=\{-r\chi_A,r\chi_A\}.
\]
Let \(z\in L^\infty(\Omega)\) satisfy \(\|z\|_\infty\geq r\). By
Lemma~\ref{lem:atoms}(i),
\[
z=c_z\quad\text{a.e. on }A,
\qquad
|c_z|\le\|z\|_\infty.
\]
Since \(\|\chi_A\|_\infty=1\),
\[
\min_{\varepsilon\in\{-1,1\}}
\|z-\varepsilon r\chi_A\|_\infty
=
\max\left\{
\bigl||c_z|-r\bigr|,
\,
\|z\chi_{\Omega\setminus A}\|_\infty
\right\}
\leq
\|z\|_\infty.
\]
Thus there exists \(w\in F_r\) satisfying
\[
\|w-z\|_\infty
\le
\|z\|_\infty
=
\|w\|_\infty+\|z\|_\infty-r.
\]
Proposition~\ref{pro:finite_set_criterion}, with \(\eta=r\), shows that
\(L^\infty(\Omega)\) is Gromov-compactifiable.

Suppose now that \(\mu\) is atomless. We show that
Proposition~\ref{pro:finite_set_criterion} fails at the origin for
\(r=1\). Let
\[
F:=\{w_1,\ldots,w_k\}\subset L^\infty(\Omega)
\]
be nonempty and finite, and let \(\eta>0\). Choose measurable
representatives of the \(w_i\)'s and set
\[
A_i:=\Set{t\in\Omega}{|w_i(t)|>\|w_i\|_\infty -\eta}\qquad (1\leq i\leq k). 
\]
Then
\[
\mu(A_i)>0
\qquad\text{for all }1\leq i\leq k.
\]
By Lemma~\ref{lem:atoms}(ii), there are pairwise disjoint measurable
sets \(B_i\subset A_i\) with
\[
\mu(B_i)>0
\qquad\text{for all }1\leq i\leq k.
\]
Define
\[
z:=
-\sum_{i=1}^k\operatorname{sgn}(w_i)\chi_{B_i}.
\]
Since the \(B_i\) are pairwise disjoint and \(\mu(B_i)>0\) for every
\(i\),
\[
|z|=\chi_{\bigcup_{i=1}^kB_i}\quad\text{a.e.},
\qquad
\|z\|_\infty=1.
\]
Moreover,
\[
\operatorname*{ess\,sup}_{B_i}|w_i|
> \|w_i\|_\infty -\eta,
\]
and on \(B_i\),
\[
|w_i-z|=|w_i|+1.
\]
Hence, for every \(1\leq i\leq k\),
\[
\begin{aligned}
\|w_i-z\|_\infty
&\ge
1+\operatorname*{ess\,sup}_{B_i}|w_i|\\
&>
\|w_i\|_\infty +1-\eta=
\|w_i\|_\infty+\|z\|_\infty-\eta.
\end{aligned}
\]
Thus \(F\) does not satisfy
Proposition~\ref{pro:finite_set_criterion} at the origin with radius
\(1\). Since \(F\) and \(\eta\) were arbitrary,
\(L^\infty(\Omega)\) is not Gromov-compactifiable.
\end{proof}

\bibliographystyle{amsplain}
\bibliography{References_Wasserstein}

@incollection {MR919829,
    AUTHOR = {Gromov, M.},
     TITLE = {Hyperbolic groups},
 BOOKTITLE = {Essays in group theory},
    SERIES = {Math. Sci. Res. Inst. Publ.},
    VOLUME = {8},
     PAGES = {75--263},
 PUBLISHER = {Springer, New York},
      YEAR = {1987},
      ISBN = {0-387-96618-8},
   MRCLASS = {20F32 (20F06 20F10 22E40 53C20 57R75 58F17)},
  MRNUMBER = {919829},
MRREVIEWER = {Christopher\ W.\ Stark},
       DOI = {10.1007/978-1-4613-9586-7\_3},
       URL = {https://doi.org/10.1007/978-1-4613-9586-7_3},
}

@article {MR4205705,
    AUTHOR = {Ibarluc\'ia, Tom\'as and Megrelishvili, Michael},
     TITLE = {Maximal equivariant compactification of the {U}rysohn spaces
              and other metric structures},
   JOURNAL = {Adv. Math.},
  FJOURNAL = {Advances in Mathematics},
    VOLUME = {380},
      YEAR = {2021},
     PAGES = {Paper No. 107599, 33},
      ISSN = {0001-8708,1090-2082},
   MRCLASS = {54D35 (03C66 22F30 22F50 37B05 46B20 54B30)},
  MRNUMBER = {4205705},
MRREVIEWER = {Konstantin\ L.\ Kozlov},
       DOI = {10.1016/j.aim.2021.107599},
       URL = {https://doi.org/10.1016/j.aim.2021.107599},
}

@incollection {MR4422057,
    AUTHOR = {Karlsson, Anders},
     TITLE = {Elements of a metric spectral theory},
 BOOKTITLE = {Dynamics, geometry, number theory---the impact of {M}argulis
              on modern mathematics},
     PAGES = {276--300},
 PUBLISHER = {Univ. Chicago Press, Chicago, IL},
      YEAR = {2022}
      }

@article {MR4700367,
    AUTHOR = {Arosio, Leandro and Fiacchi, Matteo and Gontard, S\'ebastien
              and Guerini, Lorenzo},
     TITLE = {The horofunction boundary of a {G}romov hyperbolic space},
   JOURNAL = {Math. Ann.},
  FJOURNAL = {Mathematische Annalen},
    VOLUME = {388},
      YEAR = {2024},
    NUMBER = {2},
     PAGES = {1163--1204},
      ISSN = {0025-5831,1432-1807},
   MRCLASS = {32F45 (32H50 53C23)},
  MRNUMBER = {4700367},
MRREVIEWER = {Feng\ Rong},
       DOI = {10.1007/s00208-022-02551-0},
       URL = {https://doi.org/10.1007/s00208-022-02551-0},
}

@article {MR4952998,
    AUTHOR = {Daniilidis, A. and Garrido, M. I. and Jaramillo, J. A. and
              Tapia-Garc\'ia, S.},
     TITLE = {Horofunction extension and metric compactifications},
   JOURNAL = {Trans. Amer. Math. Soc. Ser. B},
  FJOURNAL = {Transactions of the American Mathematical Society. Series B},
    VOLUME = {12},
      YEAR = {2025},
     PAGES = {1130--1155},
      ISSN = {2330-0000},
   MRCLASS = {53C23 (46B03 46B20 51F30 54D35)},
  MRNUMBER = {4952998},
MRREVIEWER = {Johann\ Langemets},
       DOI = {10.1090/btran/234},
       URL = {https://doi.org/10.1090/btran/234},
}

@article {MR5096390,
    AUTHOR = {Jiang, Huajian and Cui, Xiaojun},
     TITLE = {1-{W}asserstein spaces over unbounded {P}olish metric spaces
              are not {G}romov-compactifiable},
   JOURNAL = {Proc. Amer. Math. Soc.},
  FJOURNAL = {Proceedings of the American Mathematical Society},
    VOLUME = {154},
      YEAR = {2026},
    NUMBER = {9},
     PAGES = {4049--4057},
      ISSN = {0002-9939,1088-6826},
   MRCLASS = {54D35 (49Q22 53C23)},
  MRNUMBER = {5096390},
       DOI = {10.1090/proc/17681},
       URL = {https://doi.org/10.1090/proc/17681},
}

@book {MR2401600,
    AUTHOR = {Ambrosio, Luigi and Gigli, Nicola and Savar\'e, Giuseppe},
     TITLE = {Gradient flows in metric spaces and in the space of
              probability measures},
    SERIES = {Lectures in Mathematics ETH Z\"urich},
   EDITION = {Second},
 PUBLISHER = {Birkh\"auser Verlag, Basel},
      YEAR = {2008},
     PAGES = {x+334},
      ISBN = {978-3-7643-8721-1},
   MRCLASS = {49-02 (28A33 35K55 35K90 49Q20 60B05)},
  MRNUMBER = {2401600},
MRREVIEWER = {Pietro\ Celada},
}

@book {MR2459454,
    AUTHOR = {Villani, C\'edric},
     TITLE = {Optimal transport},
    SERIES = {Grundlehren der mathematischen Wissenschaften [Fundamental
              Principles of Mathematical Sciences]},
    VOLUME = {338},
      NOTE = {Old and new},
 PUBLISHER = {Springer-Verlag, Berlin},
      YEAR = {2009},
     PAGES = {xxii+973},
      ISBN = {978-3-540-71049-3},
   MRCLASS = {49-02 (28A75 37J50 49Q20 53C23 58E30)},
  MRNUMBER = {2459454},
MRREVIEWER = {Dario\ Cordero-Erausquin},
       DOI = {10.1007/978-3-540-71050-9},
       URL = {https://doi.org/10.1007/978-3-540-71050-9},
}

@article {MR2403310,
    AUTHOR = {Champion, Thierry and De Pascale, Luigi and Juutinen, Petri},
     TITLE = {The {$\infty$}-{W}asserstein distance: local solutions and
              existence of optimal transport maps},
   JOURNAL = {SIAM J. Math. Anal.},
  FJOURNAL = {SIAM Journal on Mathematical Analysis},
    VOLUME = {40},
      YEAR = {2008},
    NUMBER = {1},
     PAGES = {1--20},
      ISSN = {0036-1410,1095-7154},
   MRCLASS = {49Q20 (49K30)},
  MRNUMBER = {2403310},
       DOI = {10.1137/07069938X},
       URL = {https://doi.org/10.1137/07069938X},
}

@book {MR233396,
    AUTHOR = {Billingsley, Patrick},
     TITLE = {Convergence of probability measures},
 PUBLISHER = {John Wiley \& Sons, Inc., New York-London-Sydney},
      YEAR = {1968},
     PAGES = {xii+253},
   MRCLASS = {60.30},
  MRNUMBER = {233396},
MRREVIEWER = {M.\ M.\ Siddiqui},
}

@book {MR2759829,
    AUTHOR = {Brezis, Haim},
     TITLE = {Functional analysis, {S}obolev spaces and partial differential
              equations},
    SERIES = {Universitext},
 PUBLISHER = {Springer, New York},
      YEAR = {2011},
     PAGES = {xiv+599},
      ISBN = {978-0-387-70913-0},
       DOI = {10.1007/978-0-387-70914-7},
   MRCLASS = {35-01 (46-01 46E35 46N20 47F05)},
  MRNUMBER = {2759829},
MRREVIEWER = {Vicen\c tiu\ D.\ R\u adulescu},
}

@article {CiaciLangemetsLissitsin2022,
    AUTHOR = {Ciaci, Stefano and Langemets, Johann and Lissitsin, Aleksei},
     TITLE = {Attaining strong diameter two property for infinite cardinals},
   JOURNAL = {J. Math. Anal. Appl.},
  FJOURNAL = {Journal of Mathematical Analysis and Applications},
    VOLUME = {513},
      YEAR = {2022},
    NUMBER = {1},
     PAGES = {Paper No. 126185},
       DOI = {10.1016/j.jmaa.2022.126185},
       URL = {https://doi.org/10.1016/j.jmaa.2022.126185},
}

@article {Rieffel2002,
    AUTHOR = {Rieffel, Marc A.},
     TITLE = {Group {$C^*$}-algebras as compact quantum metric spaces},
   JOURNAL = {Doc. Math.},
    VOLUME = {7},
      YEAR = {2002},
     PAGES = {605--651},
       DOI = {10.4171/DM/133},
       URL = {https://doi.org/10.4171/DM/133},
}

@article {Walsh2007,
    AUTHOR = {Walsh, Cormac},
     TITLE = {The horofunction boundary of finite-dimensional normed spaces},
   JOURNAL = {Math. Proc. Cambridge Philos. Soc.},
    VOLUME = {142},
      YEAR = {2007},
    NUMBER = {3},
     PAGES = {497--507},
       DOI = {10.1017/S0305004107000096},
       URL = {https://doi.org/10.1017/S0305004107000096},
}

@article {Gutierrez2019,
    AUTHOR = {Guti\'errez, Armando W.},
     TITLE = {On the metric compactification of infinite-dimensional {$\ell_p$} spaces},
   JOURNAL = {Canad. Math. Bull.},
    VOLUME = {62},
      YEAR = {2019},
    NUMBER = {3},
     PAGES = {491--507},
       DOI = {10.4153/S0008439518000681},
       URL = {https://doi.org/10.4153/S0008439518000681},
}

@article {Gutierrez2020,
    AUTHOR = {Guti\'errez, Armando W.},
     TITLE = {Characterizing the metric compactification of {$L_p$} spaces by random measures},
   JOURNAL = {Ann. Funct. Anal.},
    VOLUME = {11},
      YEAR = {2020},
    NUMBER = {2},
     PAGES = {227--243},
       DOI = {10.1007/s43034-019-00024-1},
       URL = {https://doi.org/10.1007/s43034-019-00024-1},
}

@article {Werner2001,
    AUTHOR = {Werner, Dirk},
     TITLE = {Recent progress on the {D}augavet property},
   JOURNAL = {Irish Math. Soc. Bull.},
    NUMBER = {46},
      YEAR = {2001},
     PAGES = {77--97},
       DOI = {10.33232/BIMS.0046.77.97},
       URL = {https://doi.org/10.33232/BIMS.0046.77.97},
}

@article {BecerraGuerreroLopezPerezRuedaZoca2014,
    AUTHOR = {Becerra Guerrero, Julio and L\'opez-P\'erez, Gin\'es and Rueda Zoca, Abraham},
     TITLE = {Octahedral norms and convex combination of slices in {B}anach spaces},
   JOURNAL = {J. Funct. Anal.},
    VOLUME = {266},
      YEAR = {2014},
    NUMBER = {4},
     PAGES = {2424--2435},
       DOI = {10.1016/j.jfa.2013.09.004},
       URL = {https://doi.org/10.1016/j.jfa.2013.09.004},
}

@article {BertrandKloeckner2012,
    AUTHOR = {Bertrand, J\'er\^ome and Kloeckner, Beno\^it},
     TITLE = {A geometric study of {W}asserstein spaces: {H}adamard spaces},
   JOURNAL = {J. Topol. Anal.},
    VOLUME = {4},
      YEAR = {2012},
    NUMBER = {4},
     PAGES = {515--542},
       DOI = {10.1142/S1793525312500227},
       URL = {https://doi.org/10.1142/S1793525312500227},
}

@article {MR4154570,
    AUTHOR = {Zhu, Guomin and Wu, Hongguang and Cui, Xiaojun},
     TITLE = {Horo-functions associated to atom sequences on the
              {W}asserstein space},
   JOURNAL = {Arch. Math. (Basel)},
    VOLUME = {115},
      YEAR = {2020},
    NUMBER = {5},
     PAGES = {555--566},
  MRNUMBER = {4154570},
       DOI = {10.1007/s00013-020-01490-z},
       URL = {https://doi.org/10.1007/s00013-020-01490-z},
}

@article {MR4249875,
    AUTHOR = {Zhu, Guomin and Li, Wen-Long and Cui, Xiaojun},
     TITLE = {Busemann functions on the {W}asserstein space},
   JOURNAL = {Calc. Var. Partial Differential Equations},
    VOLUME = {60},
      YEAR = {2021},
    NUMBER = {3},
     PAGES = {Paper No. 97, 16},
  MRNUMBER = {4249875},
       DOI = {10.1007/s00526-021-01937-3},
       URL = {https://doi.org/10.1007/s00526-021-01937-3},
}
\end{document}